\newtheorem{theorem}{Theorem}[section]
\newtheorem{thm}[theorem]{Theorem}
\newtheorem{lem}[theorem]{Lemma}
\newtheorem{proposition}[theorem]{Proposition}
\newtheorem{corollary}[theorem]{Corollary}
\theoremstyle{definition}
\newtheorem{defn}[theorem]{Definition}
\theoremstyle{remark}
\newtheorem{remark}[theorem]{Remark}
\newtheorem{rem}[theorem]{Remark}
\numberwithin{equation}{section}
 \DeclareMathAlphabet{\mathpzc}{OT1}{pzc}{m}{it}
\def\a{\alpha}
\def\ep{\varepsilon}
\def\P{\mathbb P}
\def\dim{{\rm dim}_{_{\rm H}}}
\def\dimh{{\rm dim}_{_{\rm H}}}
\def\dimp{{\rm dim}_{_{\rm P}}}
\def\l{{\langle}}
\def\r{\rangle}
\def\I{{\rm 1 \hskip-2.9truept l}}
 \newcommand{\E}{\mathbb{E}}            % expectation
 \newcommand{\R}{\mathbb{R}}
 \newcommand{\PP}{\mathbb{P}}
 \newcommand{\mcl}{\mathcal}
 \newcommand{\Be}{\begin{equation}}
 \newcommand{\Ee}{\end{equation}}
 \newcommand{\Bs}{\begin{split}}
 \newcommand{\Es}{\end{split}}
  \newcommand{\Bes}{\begin{equation*}}
 \newcommand{\Ees}{\end{equation*}}
 \newcommand{\BT}{\begin{thm}}
 \newcommand{\ET}{\end{thm}}
 \newcommand{\Bp}{\begin{proof}}
 \newcommand{\Ep}{\end{proof}}
 \newcommand{\BL}{\begin{lem}}
 \newcommand{\EL}{\end{lem}}
 \newcommand{\BP}{\begin{proposition}}
 \newcommand{\EP}{\end{proposition}}
 \newcommand{\BC}{\begin{corollary}}
 \newcommand{\EC}{\end{corollary}}
 \newcommand{\BR}{\begin{rem}}
 \newcommand{\ER}{\end{rem}}
 \newcommand{\BD}{\begin{defn}}
 \newcommand{\ED}{\end{defn}}
 \newcommand{\BI}{\begin{itemize}}
 \newcommand{\EI}{\end{itemize}}
 \newcommand{\dif}{{\rm d}}
\begin{document}
\title
[Uniform dimension results for a family of Markov processes]
{Uniform dimension results for a family of Markov processes}

\author[X. Sun ]{Xiaobin {\sc Sun}}
\address{School of Mathematics and Statistics\\Jiangsu Normal University\\
Xuzhou 221116, China}
\email{xbsun@jsnu.edu.cn}

\author[Y. Xiao]{Yimin Xiao}
\address{Department of Statistics and Probability,
Michigan State University, East Lansing, MI 48824}
\email{xiao@stt.msu.edu}

\author[L. Xu]{Lihu Xu }
\address{Department of Mathematics,
Faculty of Science and Technology,
University of Macau, E11
Avenida da Universidade, Taipa,
Macau, China;
UM Zhuhai Research Institute, Zhuhai, China.}
\email{lihuxu@umac.mo}

\author[J. Zhai]{Jianliang Zhai}
\address{School of Mathematical Science,
University of Science and Technology of China, Hefei, 230026, China}
\email{zhaijl@ustc.edu.cn}

\maketitle

\begin{abstract} \label{abstract}
In this paper we prove uniform Hausdorff and packing
dimension results for the images of a large family of Markov processes.
The main tools are the two covering principles in \cite{X2004}.
As applications, uniform Hausdorff and packing dimension results
for self-similar Markov processes, certain classes of L\'evy processes, stable jump diffusions and
non-symmetric stable-type processes are obtained.
\end{abstract}

\section{Preliminaries}

Fractal properties of Brownian motion and more general L\'evy processes have
been studied extensively. We refer to the recent books of M\"orters and Peres
\cite{MP}, Schilling and Partzsch \cite{Schilling-Partzsch2014}, B\"ottcher, Schilling and Wang
\cite{BSW13}, the survey papers \cite{Taylor86, X2004}, and more recent articles
\cite{KX05, KX15, KSW15} for further information.

Let $X= \{X(t), t \in \R_+\}$ be a stable L\'evy process in $\R^d$  of index $\a \ (0 < \a \le 2)$.
For any Borel set $E \subseteq \R_+=[0, \infty)$, Blumenthal and Getoor \cite{BG60} obtained
the Hausdorff dimension of the image set $X(E)$, namely,
\begin{equation}\label{Eq:BG60}
\dim X(E) = \min \big\{d,\   \a\, \dim E\big\} \ \ \text{  a.s.},
\end{equation}
where $\dim $ denotes Hausdorff dimension; see Falconer \cite{Fal},
or \cite{MP,Taylor86,X2004} for the definitions and properties of
Hausdorff measure and Hausdorff dimension.

Result (\ref{Eq:BG60}) has been extended and strengthened in various
directions by many authors; see \cite{Taylor86, X2004} and the references
therein for a historical account and information on development on (mostly)
L\'evy processes. In particular, Hawkes and Pruitt \cite[Theorem 4.1]{HP}
established a uniform version of (\ref{Eq:BG60}): If $X$
is a strictly stable L\'evy process of index $\a$ in $\R^d$ and $d \ge \a$,
then there exists a single null probability event outside of which (\ref{Eq:BG60})
holds simultaneously for all Borel sets $E \subseteq \R_+$. The first such
uniform dimension result was due to Kaufman \cite{K68} for planar Brownian
motion (i.e., $\alpha = 2$ and $d = 2$). A short and easily accessible redaction of
Kaufman's original argument can be found in \cite{Schilling-Partzsch2014}.
Such uniform dimension results are useful in many situations
because it allows $E$ to be a random set (cf. \cite{BCR,MP} for some applications).
Perkins and Taylor \cite{PerkinsTaylor} further proved uniform Hausdorff and
packing  measure results for strictly stable L\'evy processes. As a consequence,
they proved a packing dimension analogue of \cite[Theorem 4.1]{HP}: If $X$
is a strictly stable L\'evy process of index $\a$ in $\R^d$ and $d \ge \a$,
then with probability 1,
\begin{equation}\label{Eq:PT87}
\dimp X(E) =\a\, \dimp E \ \ \hbox{ for all Borel sets }\, E \subseteq \R_+,
\end{equation}
where $\dimp $ denotes packing dimension (cf. e.g., \cite{Fal,MP,Taylor86,X2004}).
Note that, a strictly $\a$-stable L\'evy process is self-similar with index
$H= 1/\alpha$, and is an important representative among self-similar processes
and random fractals.
%Self-similarity is an important feature of fractals and, for stochastic processes,
%simplifies

In recent years, there has been increasing interest in constructing and studying more general Markov processes
related to L\'evy processes. A large class of Markov processes are generated by pseudo-differential operators
\cite{BSW13, Jacob2001-6, Jacob-Schilling2001, Schilling-Schnurr2010}, their
corresponding transition probabilities and heat kernel estimates have been studied in, for example,
\cite{Kolok00b, Kuhn2017a, Kuhn2017b,Negoro94}. Also, we refer to \cite{B88} for the martingale problem
of pure jump Markov processes, \cite{C2009, CK, CZ} for stable-like processes %on $d$-sets
related to Dirichlet forms and their heat kernel estimates.
%\cite{ Kolok00b} for the heat kernel estimates of stable-like processes.
Many natural questions regarding sample path and fractal properties arise for such Markov processes.
Xiao \cite{X2004} gives a comprehensive survey on fractal properties of L\'evy or more general Markov
processes before 2004. A lot of progress has been made since then.
%we refer to \cite{Kolok00b, Negoro94} for
%the sample path property of stable-like jump-diffusions and stable-like processes respectively.
%Fractal properties such as Hausdorff dimension are much
%more delicate and difficult,
See, for example,  \cite{KSX12, KX03,KX05,KX08,KX09,KX15} for various results on L\'evy processes,
\cite{C2009, CK, Yang}  for Hausdorff dimension of the range or graph of stable-like processes,
%\cite{LX1998} for the
%Hausdorff dimension of the image, graph and zero set of self-similar Markov process,
\cite{KSW15,Knopova-Sch} for results on Hausdorff dimensions of
the image, level and collision sets  of a class of Feller process generated by a pseudo-differential operator.
However,  many interesting problems described \cite{X2004} are still open.
 %The reference \cite{X2004} gives a comprehensive survey
% on fractal properties of Markov processes before 2004. While a lot of progress has made since then,

%processes related to Dirichlet forms and heat kernel estimates of this class of processes:\cite{C2009};
%For stable-like process related to existence and uniqueness for the martingale problem: , and for heat
%kernel estimates: \cite{CK,Kolok00b}, and for the Hausdorff dimension of the range and the graph: \cite{Yang};
%For processes defined by pseudo-differential operators: \cite{BSW13, Jacob2001-6, Jacob-Schilling2001},
% and for the existence of a transition density: \cite{Negoro94}, and for the heat kernel estimates:
%\cite{Kuhn2017a, Kuhn2017b}, and ; For a splendid survey : \cite{X2004}.

%such as stable-like processes or stable jump diffusions \cite{B88,CK,Kolok00b,Negoro94,Yang},
%and the Feller (or L\'evy-type) processes related to pseudo-differential operators
%(see, e.g., \cite{BSW13, KSW15}) which have discontinuous sample functions, due to their
%importance in probability theory and in applications. We refer to the survey paper
%of Chen \cite{C2009} for more information. Many natural questions regarding sample
%path and fractal properties arise for such processes, even though some of them have
%been studied in \cite{BCR,KSW15,LX1998}, many of them are still open (see \cite{X2004}
%for some examples).

The main purpose of this paper is to provide a general method for establishing uniform
Hausdorff and packing dimension results for more general Markov processes.
In particular, we extend the methods of Hawkes \cite{HJ} and Hawkes and Pruitt
\cite{HP} for stable L\'evy processes (see also Pruitt \cite{Pruitt1975}) to a large
family of Markov processes including more general L\'evy processes, the stable-like
processes or stable jump diffusions in \cite{CK, CZ, Kolok00b,Negoro94}. The key
technical tools are the covering principles
in Section 2, which improve Lemmas 8.1 and 8.2 in Xiao \cite{X2004}. We apply
them to show the main result of this paper, Theorem \ref{main theorem}, in Section 3.
In Section 4, we apply Theorem \ref{main theorem} to L\'evy processes and stable-type
processes. We mention that Benjamini, Chen and Rohde
\cite{BCR} studied the normally reflected Brownian motion (RBM) in a class of
non-smooth domains and proved their uniform Hausdorff dimension result by using the
uniform H\"older continuity of RBM,  the aforementioned Kaufman's theorem for Brownian
motion and a subordination argument. Moreover, they indicated in Remark 3.10 in \cite{BCR}
that similar result still holds for the stable-like processes in \cite{CK} by using the
covering principle in Xiao \cite{X2004}. The scope of the present paper is
a lot broader, and we expect that the main result of this paper will also be useful
for studying fractal sets related to intersections and multiple points of Markov processes.

%We prove in this paper a criterion for uniform Hausdorff dimension of a family of
%Markov processes, Our main result is proved by applying the two covering
%principles in \cite[Lemmas 8.1 and 8.2]{X2004}. As applications, we use this
%criterion to get the uniform dimension of two families of Markov %processes,
%i.e., stable-like processes and diffusion processes, even they have been studied
%by other authors via different approaches.

%Let us first recall the definition of Hausdorff dimension and Hausdorff measure
%\cite{X2004}. Let $\Phi$ be the class of functions $\varphi: (0, \delta)
%\rightarrow (0, \infty)$ which %are right continuous, monotone increasing
%with $\varphi(0+)=0$ and such that there exists
%a finite constant $K>0$ such that
%\begin{eqnarray}
%\frac{\varphi(2s)}{\varphi(s)}\leq K,\quad \text{for} \quad 0<s<\frac{1}{2}\delta.
%\end{eqnarray}

%For $\varphi\in\Phi$, the Hausdorff measure of $A\subset \R^d$ is defined by
%$$
%\varphi-m(A)=\lim_{\varepsilon \rightarrow 0} \inf\big\{\sum_{i}\varphi(2r_i):
%A\subset\cup_{i=1}B(x_i, r_i), r_i<\varepsilon\big\},
%$$
%where $B(x,r)$ denotes the open ball of radius $r$ centered at $x$.
%The Hausdorff dimension of $A$ is  denoted by
%$$
%\dim A=\inf\{\alpha>0: \hbox{$s^{\alpha}$-$m(A)$}=0\}.
%$$
In the rest of this paper, we assume that $X = \{X(t), \, t \in \R_+ , \PP^x\}$
is a time-homogeneous Markov process with state space $\R^d$, defined on some
probability space $(\Omega, \mathcal {F}, \PP)$, and satisfies the strong Markov property.
We assume that $X$ is separable and its sample paths are almost surely right
continuous and have left limit at every $t \in \R_+$ (such a sample function
will be called cadlag).

An unspecified positive and finite constant will be denoted by $C$, which may
be different in each appearance. More specific constants are numbered by $K_1,
K_2, \ldots$ and $C_1, C_2, \ldots.$

\section{The covering principles}

In the review article \cite[Lemmas 8.1 and 8.2]{X2004}, Xiao stated without
proof two covering principles that extend respectively Lemma 3.1
in  \cite{HP}  and Lemma 3 in \cite{HJ}  for L\'evy processes (see also Lemmas
1 and 2 in  \cite{Pruitt1975}), and suggested that they are useful for proving
uniform Hausdorff and packing dimension results for the images of a general
Markov process. In this section, we weaken the conditions of Lemmas 8.1 and
8.2 in \cite{X2004} and provide proofs.

%Let $\{t_n, n\geq 1\}$ be a sequence of positive real numbers such that
%$\sum^{\infty}_{n=1}t_n^p<\infty$ for some $p>0$, and let $\mathcal{C}_n$
%be a class of $N_n$ intervals in $\mathbb{R}_{+}$ of length $t_n$ with
%$\log N_n= O(1)|\log t_n|$. For example, we can take $t_n=2^{-n}$ and
%$\mathcal{C}_n$ the class of dyadic intervals of order $n$ in, say, $[0, 1]$.

The following lemma is useful for proving upper bounds for the Hausdorff and
packing dimensions of the image of a Markov
process. Its proof is a modification of that  of Lemma 3.1 in \cite{HP}, and we
provide it for the sake of completeness. Moreover, in Proposition \ref{Prop:C1}
below, we will provide a convenient way to verify condition \eqref{max}.

\begin{lem}\label{lem 2.2} Let $X=\{X(t),t\in\mathbb{R}_{+}, \mathbb{P}^{x}\}$
be a time homogenous strong Markov process in $\mathbb{R}^{d}$. Let $\{t_n, n\geq 1\}$
be a sequence of positive real numbers such that $\sum^{\infty}_{n=1}t_n^p<\infty$
for some $p>0$, and let $\mathcal{C}_n$ be a class of $N_n$ intervals in
$\mathbb{R}_{+}$ of length $t_n$ with $\log N_n= O(1)|\log t_n|$. If there is a
sequence $\{\theta_n\}$ of positive numbers such that for all $x \in \R^d$,
\begin{eqnarray}\label{max}
\mathbb{P}^{x}\bigg\{\sup_{0\leq s\leq t_n}|X(s)-x|\geq \theta_n \bigg\}
\leq K_1 t_n^{\delta},
\end{eqnarray}
where $K_1$ and $\delta$ are some positive constants, then there exists a positive
integer $K_2$, depending on $p$ and $\delta$ only, such that,
with $\mathbb{P}^{x}$-probability one, for $n$ large enough, $X(I)$
can be covered by $K_2$ balls of radius $\theta_n$ whenever $I\in \mathcal{C}_n$.
\end{lem}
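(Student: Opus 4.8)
The plan is to control, for each fixed interval $I \in \mathcal{C}_n$, the probability that the image $X(I)$ fails to be covered by a bounded number of balls of radius $\theta_n$, and then to sum these probabilities over all intervals in all classes $\mathcal{C}_n$ so that a Borel--Cantelli argument applies. The key observation is that condition \eqref{max} is a uniform (in the starting point $x$) bound on the oscillation of the process over a time interval of length $t_n$; combined with the strong Markov property, this lets me iterate to control the oscillation over a single interval $I$ of length $t_n$ by a sum of independent-looking increments.

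First I would fix $I = [a, a+t_n] \in \mathcal{C}_n$ and the starting point $x$. The idea is that $X(I)$ fails to lie in a single ball of radius $\theta_n$ about $X(a)$ exactly when the oscillation $\sup_{s \in I}|X(s) - X(a)|$ exceeds $\theta_n$. By the Markov property at time $a$ and the spatial homogeneity encoded in \eqref{max}, the conditional probability of this event given $\mathcal{F}_a$ is at most $K_1 t_n^\delta$ for every starting value $X(a)$. Now I would partition $I$ into successive sub-excursions: define stopping times at which the process first moves a distance $\theta_n$ from its position at the start of the current excursion, and count how many such displacements can occur within time $t_n$. Applying the strong Markov property repeatedly at these stopping times, the probability of having $j$ or more displacements of size $\theta_n$ before time $t_n$ is bounded by $(K_1 t_n^\delta)^j$, since each fresh excursion independently must achieve an oscillation exceeding $\theta_n$ in the remaining time (which is at most $t_n$). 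Choosing the integer $K_2$ so that $K_2 \delta > p + 1$ (roughly), the probability that $X(I)$ cannot be covered by $K_2$ balls of radius $\theta_n$ is then at most $(K_1 t_n^\delta)^{K_2}$.

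Next I would sum over $I$ and $n$. There are $N_n$ intervals in $\mathcal{C}_n$, so the total probability for level $n$ is bounded by $N_n (K_1 t_n^\delta)^{K_2}$. Using the hypothesis $\log N_n = O(1)\,|\log t_n|$, we have $N_n \le C\, t_n^{-M}$ for some constant $M$, hence the level-$n$ bound is at most $C\, t_n^{K_2 \delta - M}$. Choosing $K_2$ large enough that $K_2 \delta - M \ge 2p$ (say), and then invoking $\sum_n t_n^p < \infty$ together with $t_n \to 0$, the series $\sum_n N_n (K_1 t_n^\delta)^{K_2}$ converges. By the Borel--Cantelli lemma, with $\PP^x$-probability one only finitely many of these bad events occur, which is exactly the claim: for $n$ large enough, every $I \in \mathcal{C}_n$ has $X(I)$ coverable by $K_2$ balls of radius $\theta_n$.

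The main obstacle I anticipate is the iteration step that controls the number of $\theta_n$-displacements within a single interval via the strong Markov property. One must set up the stopping times carefully so that each renewal genuinely resets the oscillation estimate \eqref{max} — in particular, the bound on each excursion should use the \emph{full} time budget $t_n$ as an upper bound on the remaining time, since the sub-intervals between successive stopping times need not have length $t_n$. The uniformity of \eqref{max} over all $x \in \R^d$ is precisely what makes this renewal argument go through cleanly, and the determination of $K_2$ in terms of $p$ and $\delta$ (together with the implied constant in $\log N_n = O(1)|\log t_n|$) is the delicate bookkeeping that yields the stated dependence of $K_2$ on $p$ and $\delta$ only.
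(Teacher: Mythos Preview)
Your proposal is correct and follows essentially the same approach as the paper: define successive stopping times $\tau_j$ at which the process first moves distance $\theta_n$ from $X(\tau_{j-1})$, use the strong Markov property together with the uniform bound \eqref{max} to get $\PP^x\{\tau_k - \tau_0 \le t_n\} \le (K_1 t_n^{\delta})^k$, then sum over all $I \in \mathcal{C}_n$ and all $n$ using $N_n \le C\, t_n^{-C}$ and apply Borel--Cantelli. Your anticipated obstacle and its resolution (using the full time budget $t_n$ at each renewal, relying on the uniformity in $x$) are exactly the points the paper's proof handles, and your bookkeeping for choosing $K_2$ matches the paper's choice $\delta K_2 - C \ge p$.
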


\begin{proof} Let $I\in \mathcal{C}_n$ and write it as $I=[a, a+t_n]$.
Let $\tau_0=a$ and, for all $j \ge 1$, define
$$
\tau_j=\inf\{s> \tau_{j-1}:\, |X_s-X_{\tau_{j-1}}|>\theta_n\},
$$
with the convention $\inf \emptyset = \infty$. It is easy to see
\
\Be  \label{e:CovTau}
\{X(I, \omega)\ \text{cannot be covered by} \ k\ \text{balls of radius}\
\theta_n \}\subseteq \{\tau_k-\tau_0\leq t_n\}.
\Ee
Moreover, by the strong Markov property and \eqref{max},
\begin{eqnarray*}
\mathbb{P}^{x}\{\tau_k-\tau_0\leq t_n\} &\leq& \mathbb{E}^x\left\{\mathbb{E}^{x}
\left[1_{\{\tau_k-\tau_{k-1}\leq t_n\}}1_{\{\tau_{k-1}-\tau_0\leq t_n\}}
|\mathcal{F}_{\tau_{k-1}}\right]\right\}  \\
&=&\mathbb{E}^x\left\{\mathbb{E}^{x}\left[1_{\{\tau_k-\tau_{k-1}\leq t_n\}}
|\mathcal{F}_{\tau_{k-1}}\right]1_{\{\tau_{k-1}-\tau_0\leq t_n\}}\right\}  \\
&\leq &\sup_{y\in\mathbb{R}^d}\mathbb{P}^{y}\left\{\sup_{0\leq s\leq t_n}|X(s)-y|\geq
\theta_n\right\}\mathbb{E}^x\left\{1_{\{\tau_{k-1}-\tau_0\leq t_n\}}\right\}\\
&\leq &K_1 t_n^{\delta} \mathbb{P}^{x}\{\tau_{k-1}-\tau_0\leq t_n\}.
\end{eqnarray*}
Using the above argument recursively, we obtain that for all $n \ge 1$,
\
\Be  \label{e:CovTau1}
\mathbb{P}^{x}\{\tau_k-\tau_0\leq t_n\} \le K_1^k t_n^{k\delta}.
\Ee
Define events
$$
A^k_n:=\big\{\exists \ I\in \mathcal{C}_n \text{ such that} \ X(I,\omega) \
\text{cannot be covered by} \ k \ \text{balls of radius} \  \theta_n\big\}.
$$
Since $\log N_n= O(1)|\log t_n|$, i.e., there exist positive constant $C$,
such that $N_n\leq C t_n^{-C}$, as $k$ is large enough (say,
$\delta k - C \ge p$), we obtain from \eqref{e:CovTau1} that
\begin{eqnarray*}
\sum^{\infty}_{n=1}\mathbb{P}^{x}(A^k_n)
& \leq & \sum^{\infty}_{n=1}N_n K_1^k t_n^{k\delta} \\
& \leq & C K_1^k\sum^{\infty}_{n=1}t_n^{-C+\delta k}
\leq C K_1^k\sum^{\infty}_{n=1}t_n^{p}<\infty.
\end{eqnarray*}
Hence, the Borel-Cantelli lemma yields the desired result.
The proof is complete.
\end{proof}

For obtaining lower bounds for the Hausdorff and packing dimensions of
the image of a Markov process, one can apply the following lemma.
Observe that the condition
\eqref{min} is significantly weaker than (8.7) in Lemma 8.2
in \cite{X2004} (which is usually satisfied only if $X$ is transient)
and is easier to verify (see \eqref{delay hitting} below).
%{\bf We may further relax the right-hand side of (\ref{min}).}

%on the delayed hitting probability is usually
%satisfied only if $X$ is transient.
\begin{lem} \label{lower bounds} Let $X=\{X(t), t\in\mathbb{R}_{+},
\mathbb{P}^{x}\}$ be a time homogenous strong Markov process in
$\mathbb{R}^d$. Let $\{r_n, n\geq 1\}$ be a sequence of positive
numbers with $\sum^{\infty}_{n=1}r_n^p<\infty$ for some $p>0$, and
let $\mathcal{D}_n$ be a class of $N_n$ balls (or cubes) of diameter
$r_n$ in $\mathbb{R}^d$ with $\log N_n= O(1)|\log r_n|$. If, for
every constant $T > 0$, there exists a sequence $\{t_n\}$ of positive
numbers and constants $K_3$ and $\delta>0$ such that
\begin{eqnarray} \label{min}
\mathbb{P}^{x} \bigg\{\inf_{t_n\leq s\le T}|X(s)-x|\leq r_n\bigg\}
\leq K_3 r_n^{\delta},
\quad \forall \, x \in \mathbb{R}^d,
\end{eqnarray}
then there exists a constant $K_{4}$, depending on $p$ and $\delta$ only,
such that, with $\mathbb{P}^{x}$-probability one, for $n$ large enough,
$X^{-1}(B) \cap [0, T]$ can be covered by at most $K_{4}$ intervals of
length $t_n$, whenever $B\in \mathcal{D}_n$.
\end{lem}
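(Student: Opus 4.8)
The plan is to mirror the proof of Lemma~\ref{lem 2.2}, but to replace the spatial excursion times used there by a sequence of successive return times to the target ball $B$. Fix $T>0$ and a ball (or cube) $B\in\mathcal{D}_n$ of diameter $r_n$, and take the sequence $\{t_n\}$ and constants $K_3,\delta$ supplied by \eqref{min}. Set $\sigma_0=\inf\{s\ge 0:\,X(s)\in B\}$ and, recursively for $j\ge 1$,
\[
\sigma_j=\inf\{s>\sigma_{j-1}+t_n:\,X(s)\in B\},
\]
with $\inf\emptyset=\infty$; these are stopping times, and $\sigma_j-\sigma_{j-1}>t_n$ by construction. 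First I would record the elementary covering fact, analogous to \eqref{e:CovTau}, that $X^{-1}(B)\cap[0,T]\subseteq\bigcup_{j:\,\sigma_j\le T}[\sigma_j,\sigma_j+t_n]$: indeed, if $X(s)\in B$ with $s\le T$ and $j$ is the largest index with $\sigma_j\le s$ (such $j$ exists since $\sigma_0\le s$), then $s\le\sigma_j+t_n$, for otherwise $X(s)\in B$ with $s>\sigma_j+t_n$ would force $\sigma_{j+1}\le s$, contradicting maximality. Hence the number of length-$t_n$ intervals needed to cover $X^{-1}(B)\cap[0,T]$ is at most $\#\{j\ge 0:\sigma_j\le T\}$, so that
\[
\big\{X^{-1}(B)\cap[0,T]\ \text{cannot be covered by}\ k\ \text{intervals of length}\ t_n\big\}\subseteq\{\sigma_k\le T\}.
\]

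The crux is to bound $\PP^x\{\sigma_k\le T\}$ by iterating the strong Markov property, exactly as in \eqref{e:CovTau1}. The key geometric observation is that, on $\{\sigma_{j-1}\le T\}$, writing $y=X(\sigma_{j-1})\in B$, the event $\{\sigma_j\le T\}$ forces $X(s)\in B$ for some $s\in[\sigma_{j-1}+t_n,T]$, whence $|X(s)-y|\le\mathrm{diam}(B)=r_n$ with $s-\sigma_{j-1}\in[t_n,T]$. Applying the strong Markov property at $\sigma_{j-1}$ and then \eqref{min} to the point $y$ gives
\[
\PP^x\{\sigma_j\le T\}\le \E^x\Big[1_{\{\sigma_{j-1}\le T\}}\,\PP^{X(\sigma_{j-1})}\big\{\inf_{t_n\le s\le T}|X(s)-X(\sigma_{j-1})|\le r_n\big\}\Big]\le K_3 r_n^{\delta}\,\PP^x\{\sigma_{j-1}\le T\}.
\]
Iterating and using $\PP^x\{\sigma_0\le T\}\le 1$ yields $\PP^x\{\sigma_k\le T\}\le (K_3 r_n^\delta)^k$, the exact analogue of \eqref{e:CovTau1}.

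The remainder is identical in form to Lemma~\ref{lem 2.2}. Defining $A^k_n:=\{\exists\,B\in\mathcal{D}_n:\,X^{-1}(B)\cap[0,T]\ \text{cannot be covered by}\ k\ \text{intervals of length}\ t_n\}$ and using $N_n\le C r_n^{-C}$ from $\log N_n=O(1)|\log r_n|$, I would bound
\[
\PP^x(A^k_n)\le N_n (K_3 r_n^\delta)^k\le C K_3^k\, r_n^{\delta k-C}\le C K_3^k\, r_n^{p}
\]
once $k$ is large enough that $\delta k-C\ge p$ (legitimate since $\sum_n r_n^p<\infty$ forces $r_n\to 0$). Then $\sum_n\PP^x(A^k_n)\le C K_3^k\sum_n r_n^p<\infty$, and the Borel--Cantelli lemma gives the claim with $K_4=k$, depending only on $p$ and $\delta$.

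The probabilistic recursion and the Borel--Cantelli step are routine once the stopping times are in place, so I expect the only delicate point to be the geometric reduction of \emph{``the process returns to the diameter-$r_n$ set $B$''} to \emph{``the process returns to within $r_n$ of its current position''}, which is precisely what allows \eqref{min} to be applied with $x=y=X(\sigma_{j-1})$. One must also check that the hitting times $\sigma_{j-1}$ genuinely place $X(\sigma_{j-1})$ in $\overline{B}$ (by right-continuity of the cadlag paths together with closedness of $B$), so that $|X(s)-y|\le r_n$ really holds; taking $B$ closed, or replacing $B$ by its closure without changing the diameter, disposes of this technicality.
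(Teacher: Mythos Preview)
Your proof is correct and follows essentially the same approach as the paper's: define successive return times to $B$ separated by at least $t_n$, reduce the covering statement to $\{\sigma_k\le T\}$, and iterate the strong Markov property together with the diameter bound $|X(s)-X(\sigma_{j-1})|\le r_n$ to obtain $\PP^x\{\sigma_k\le T\}\le (K_3 r_n^\delta)^k$, then apply Borel--Cantelli. The only cosmetic difference is that the paper sets $\tau_0=0$ rather than your $\sigma_0=\inf\{s\ge 0:X(s)\in B\}$; your choice is in fact slightly cleaner, since it guarantees $X(\sigma_0)\in\overline{B}$ and makes every step of the recursion uniform.
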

\begin{proof}
Let $B\in\mathcal{D}_n$ and assume $B=B(z, \frac{r_n}{2})$ for some
$z\in\mathbb{R}^d$ since the diameter of $B$ is $r_n$. Let $\tau_0=0$
and, for any $k \ge 1$, define
$$
\tau_k=\inf\Big\{t\geq \tau_{k-1}+t_n, |X_t-z|\leq \frac{r_n}{2}\Big\},
$$
with the convention $\inf \emptyset=\infty$. It is easy to see
$$
\big\{t: X_t\in B\big\}\subseteq \bigcup^{\infty}_{i=0}\,[\tau_i, \tau_i+t_n),
$$
which implies
$$
\big\{\tau_k> T \big\}\subseteq \big\{X^{-1}(B, \omega) \cap [0, T]\ \text{can
be covered by} \  k\  \text{intervals of length}\ t_n \big\}.
$$
Hence
$$
\big\{X^{-1}(B, \omega)\cap [0, T]\  \text{cannot be covered by} \ k\
\text{intervals of length}\, t_n \big\}\subseteq \big\{\tau_k\le T\big\}.
$$
By the strong Markov property, \eqref{min} and the fact that $X(\tau_{k-1})
\in B$ as $\tau_{k-1} \le T$, we obtain
\begin{eqnarray*}
\mathbb{P}^{x}\{\tau_k\le T \}
&\leq& \mathbb{P}^x\{\tau_k\le T|\tau_{k-1} \le T\}
\mathbb{P}^{x}\{\tau_{k-1} \le T\}\\
&\leq&\sup_{y\in B}\mathbb{P}^{y}\left\{\inf_{t_n\leq s\le T}|X(s)-z|\leq \frac{r_n}{2}\right\}\mathbb{P}^{x}\{\tau_{k-1} \le T\}\\
&=&\sup_{y\in B}\mathbb{P}^{y}\left\{\inf_{t_n\leq s<T}|X(s)-y+y-z|\leq \frac{r_n}{2}\right\}\mathbb{P}^{x}\{\tau_{k-1}\le T\}\\
&\leq&\sup_{y\in B}\mathbb{P}^{y}\left\{\inf_{t_n\leq s\le T}|X(s)-y|\leq r_n\right\}\mathbb{P}^{x}\{\tau_{k-1} \le T\}\\
&\leq& K_3 r_n^{\delta}\mathbb{P}^{x}\{\tau_{k-1} \le T\}.
\end{eqnarray*}
By iterating the above argument,  we obtain
\Be
\mathbb{P}^{x}\{\tau_k \le T\} \le K_3^k r_n^{k\delta}.\nonumber
\Ee
Define the events
\begin{equation*}
\begin{split}
A^k_n:=\big\{\omega \in \Omega: &\exists B\in \mathcal{D}_n \ \ \text{ s.t.}\ \
X^{-1}(B,\omega)\cap [0, T] \ \ \text{cannot be covered by} \\
& \ k \  \text{intervals of length}\ \ t_n\big\}.
\end{split}
\end{equation*}
Since $\log N_n= O(1)|\log r_n|$, i.e., there exists a positive constant $C$
such that $N_n\leq r_n^{-C}$ for all integers $n$, we see that  for $k$ large enough
(say, $\delta k - C \ge p$),
\begin{eqnarray*}
\sum^{\infty}_{n=1}\mathbb{P}^{x}(A^k_n)\leq \sum^{\infty}_{n=1}N_n K_3^k r_n^{k\delta}
\leq K_3^k\sum^{\infty}_{n=1}r_n^{-C+\delta k}
\leq K_3^k\sum^{\infty}_{n=1}r_n^{p}<\infty.
\end{eqnarray*}
Hence, the conclusion of Lemma \ref{lower bounds}  follows from the Borel-Cantelli lemma.
\end{proof}

%\begin{rem}
%By a similar argument, if the balls in the $\mathcal{D}_n$ are replaced
%by any other sets of diameter $r_n$ in $\mathbb{R}^d$, then the result
%of Lemma \ref{lower bounds} also holds.
%\end{rem}

\section{Main result}

%Let $\mathcal{B}$ denotes the usual Borel $\sigma$- algebra on $\mathbb{R}^d$.
%For fixed $\alpha>0$, a positive function $f(\cdot,\cdot,\cdot)$ on
%$[0,\infty)\times \mathbb{R}^d\times \mathcal{B}$ is called to have
%\emph{$\alpha$-self-similar} property if for any $t>0, r>0, x\in \mathbb{R}^d,
%A\in\mathcal{B}$,
%\begin{eqnarray}
%f(t, x, A)=f(rt, r^{\alpha}x, r^{\alpha}A). \label{self}
%\end{eqnarray}
%\vskip 3mm

The objective of this section is to establish uniform Hausdorff and packing dimension
results for the images of a time homogeneous Markov process $X = \{X(t), t\geq 0,
\mathbb{P}^x\}$ with values in $\R^d$. For any Borel set $A$ in $\mathbb{R}^d$,
denote by $P(t,x,A):=\mathbb{P}^{x}(X_t\in A)$ the transition probability of $X$. We
state the following assumptions,
where (A1) will be used for deriving uniform upper bounds, and (A2) or (A3)
for uniform lower bounds.
%For the later proof, we need some assumptions:
%\begin{assumption} \label{assumption}Let $(X_t, t\geq 0, \mathbb{P}^x)$ be
%a time homogeneous Markov process in $(\mathbb{R}^d,\mathcal{B})$

\vskip 3mm

(A1)\, There is a constant $H > 0$ such that for any $\gamma \in (0, H)$,
there exist constants $C>0$, $\eta>0$ and $t_0\in(0,1)$ such that for all
$x\in\mathbb{R}^d$ and $0<t\leq t_0$,
\begin{equation}\label{Eq:supX}
\mathbb{P}^{x}\bigg\{\sup_{0\leq s\leq t}|X(s)-x|\geq t^{\gamma}\bigg\}\leq C t^{\eta}.
\end{equation}

\vskip 3mm
(A2)  There is a sequence of vectors of non-negative numbers $J=\{(\ep_n, \zeta_n), n \ge 1\}$
such that $\ep_n \to 0$ and $\zeta_n \to 0$ as $n \to \infty$, and has the following property
(for simplicity of notation, we omit the subscript $n$): For any  $(\varepsilon, \zeta) \in J$ and
constant  $T > 0$,  there exist positive constants $C_1$, $C_2$, and $r_0 \leq 1$  such that
 for all  $0 < r \le r_0$,  $x, y \in \R^d$ with $ |y-x|\leq r$, and all $ 0 < t  \le T$,
\begin{equation}\label{Eq:P1}
P(t, y, B(x,r)) \ge C_1 \min\bigg\{1,\,  \Big(\frac{r} {t^{H-\zeta}}\Big)^{d+\ep}\bigg\};
\end{equation}
and
\begin{equation}\label{Eq:P2}
P(t, x, B(x,r)) \le   C_2 \min\bigg\{1,\, \Big(\frac{r} {t^{H+\zeta}}\Big)^{d-\ep}\bigg\}.
\end{equation}

\vskip 3mm
(A3)  We strengthen (A2) by further assuming that \eqref{Eq:P2} holds for all $ t  > 0$.

\vskip 2mm
%for all  $x \in \R^d$,  $0 < r \le r_0$ and $  t  > 0$,
%\begin{equation}\label{Eq:P2}
%P(t, x, B(x,r)) \le    C_2 \min\bigg\{1,\, \Big(\frac{r} {t^{H+\zeta}}\Big)^{d-\ep}\bigg\}.
%\end{equation}

%\item [(A3)] There exist positive constants $C_3, C_4, r_1 \le1$ such that for all $0<t\leq 1$,
%\begin{equation}\label{Eq:P3}
%P(t,y,B(x,r)) \ge C_3 \min\left\{1, \left(\frac{r}{t^{H}}\right)^d\right\}, \ \ \forall|y-x| \le r, \ \ 0<r \le r_1
%\end{equation}
%and
%\begin{equation}\label{Eq:P4}
%P(t,x,B(x,r)) \le C_4 \min\left\{1, \left(\frac{r}{t^{H}}\right)^d\right\}, \ \ \ \forall 0<r \le r_1.
%\end{equation}

%and
%\begin{equation}\label{Eq:P4}
%P(t, x, B(x,r)) \le C_4 \min\bigg\{1,\, \Big(\frac{r} {t^{H}}\Big)^{d}\bigg\},
%\quad \forall   \ 0 < r \le \eta_0.
%\end{equation}
%$$
%k_1 r^{\beta} \leq P_1(1, y, B(x,r)),\quad \forall |y-x|\leq \eta_0, \eta_0\geq r>0
%$$
%and
%$$
%P_2(1, x, B(x,r))\leq k_2 r^{\beta},\quad \forall x\in\mathbb{R}^d, \forall r>0.
%$$
%\end{assumption}

%Observe that Condition (A3) is a little stronger than (A2), which will be needed
%in the critical case of $1 = Hd$.

Condition (A2) is quite general due to the flexibility in choosing arbitrarily small
constants  $\ep$ and $\zeta$, in order for (\ref{Eq:P1}) and (\ref{Eq:P2}) to hold.
This condition can be satisfied by a large class of
Markov  processes such as those with a bounded transition density and an
approximate scaling property; see Section \ref{sec:examples} for some interesting
examples. (A3) is slightly stronger than (A2), which is needed for our subordination
argument in proving Theorem \ref{main theorem} below in the critical case of $1 = Hd$.

Condition (A1) is less obvious. In the following, we give a
sufficient condition for it to hold. For any $h \ge 0$ and $a>0$,
similar to Manstavi\v cius \cite{Man04}, we consider the function
\begin{equation}\label{Eq:alphaf}
\a(h, a)  = \sup\Big\{ P\big(s, x, B(x, a)^c\big): \, x \in \R^d,\,
0\le s \le h\Big\},
\end{equation}
where $B(x, a)^c  = \{y\in \R^d: |y-x| \ge a\}$. The function $h
\mapsto \a(h, a)$ carries a lot of information about regularity properties
of the sample paths of the Markov process $X$. For example,
Kinney \cite{Kinney53} showed that if for each fixed $a>0$, $\a (h, a)
\to 0$ as $h \to 0$ then the sample function $X(t)$ is almost surely
cadlag; and if $\a (h, a) = o(h)$ as $h \to 0$ for every fixed $a >
0$ then the sample function $X(t)$ is almost surely continuous. See
Manstavi\v cius \cite{Man04} and the references therein for further
information.

For a given  constant $H > 0$, a Markov process $X$ is said to belong
to the class $\widetilde{\EuScript M} (H)$ if there exist positive and finite
constants $C$, $\beta$, $h_0$ and $a_0$, depending
on $d$ and $H$ only, such that the following property holds: For all $h \in (0,
h_0)$ and  $ a \in (0,a_0)$ such that $h a^{-1/H} < 1$,  we have
\begin{equation}\label{Def:Mclass}
\a(h, a) \le C\, \bigg(\frac{h} {a^{1/H}}\bigg)^{\beta}.
\end{equation}
Condition (\ref{Def:Mclass}) is the same as  (1.1)
in Manstavi\v cius \cite{Man04} for the class ${\EuScript M} (\beta,\gamma)$
with $\gamma =  \beta/H$.
%Moreover, the extra constant $\ep$ adds further flexibility in verifying
%\eqref{Def:Mclass}. Hence, the class $\widetilde{\EuScript M} (H)$
%is larger than the class ${\EuScript M} (1, \frac 1 H)$
%of Markov processes considered by Manstavi\v cius \cite{Man04}.
We mention that \cite[Corollary 5.10]{Schilling-Schnurr2010} proved that
the solution of certain SDE driven by a L\'evy process
belongs to the class ${\EuScript M} (1,\gamma)$ of Manstavi\v cius
\cite{Man04} for suitable choice of $\gamma$, thus derived a result on the
$\gamma$-variation of the solution.
%and the maximal inequality \eqref{eq Maximal estimate}.
\vskip 2.5mm

The following sufficient condition for (A1)  is often convenient to use
(cf. Theorem 4.1 below).
\begin{proposition}\label{Prop:C1}
Let $X=\{X(t), t \in \R_+, \PP^x\}$ be a separable, time-homogeneous
Markov process taking values in $\R^d$. If $X$ belongs to the class
$\widetilde{\EuScript M} (H)$, then for any $\ep \in (0, 1)$ and
$\gamma \in (0, H)$,  $X$ satisfies  (\ref{Eq:supX}) with $\eta = \beta
(1 - \frac \gamma H).$
\end{proposition}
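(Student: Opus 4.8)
The plan is to upgrade the single-time (marginal) tail estimate built into the class $\widetilde{\EuScript M}(H)$ into a bound on the running maximum of $X$, and then to optimize the radius. The definition of $\widetilde{\EuScript M}(H)$, through the function $\a(h,a)$ in \eqref{Eq:alphaf}, only controls $\PP^{y}\{|X(s)-y|\ge a\}$ uniformly over $y\in\R^d$ and $0\le s\le h$, whereas \eqref{Eq:supX} concerns $\sup_{0\le s\le t}|X(s)-x|$. Hence the heart of the matter is a maximal inequality of the form
\[
\PP^{x}\Big\{\sup_{0\le s\le t}|X(s)-x|\ge 2a\Big\}\le 2\,\a(t,a),\qquad x\in\R^d,
\]
which transfers the marginal control of $\a$ to the supremum.

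First I would establish this maximal inequality by a first-passage/strong Markov argument. Fix $a>0$ and set $\sigma=\inf\{s\ge 0:\,|X(s)-x|\ge 2a\}$, a stopping time; since $X$ is separable and cadlag, on the event that the running maximum reaches level $2a$ one has $\sigma\le t$ and $|X(\sigma)-x|\ge 2a$ (the sole boundary case, where the level is only approached from the left, is handled in the usual way by right-continuity). I would then split $\{\sigma\le t\}$ according to the terminal position. On $\{\sigma\le t,\ |X(t)-x|\ge a\}$ the probability is at most $P(t,x,B(x,a)^c)\le\a(t,a)$. On $\{\sigma\le t,\ |X(t)-x|<a\}$ the triangle inequality forces $|X(t)-X(\sigma)|>a$, so applying the strong Markov property at $\sigma$ together with the monotonicity $\a(t-\sigma,a)\le\a(t,a)$ bounds the conditional probability, given $\mathcal F_\sigma$, by $\a(t,a)$; integrating out yields at most $\a(t,a)$. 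Summing the two pieces gives the displayed inequality.

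Next I would choose the radius to match \eqref{Eq:supX}. Taking $2a=t^{\gamma}$, i.e.\ $a=t^{\gamma}/2$, the inequality above gives $\PP^{x}\{\sup_{0\le s\le t}|X(s)-x|\ge t^{\gamma}\}\le 2\,\a(t,t^{\gamma}/2)$. For $t$ small the constraints $t<h_0$, $t^{\gamma}/2<a_0$ and $t\,(t^{\gamma}/2)^{-1/H}<1$ all hold, the last precisely because $\gamma<H$ makes the exponent $1-\gamma/H$ positive; fixing such a threshold $t_0\in(0,1)$, the defining bound \eqref{Def:Mclass} of $\widetilde{\EuScript M}(H)$ applies and yields
\[
\a(t,t^{\gamma}/2)\le C\Big(\frac{t}{(t^{\gamma}/2)^{1/H}}\Big)^{\beta}=C\,2^{\beta/H}\,t^{\beta(1-\gamma/H)}.
\]
This establishes \eqref{Eq:supX} with $\eta=\beta(1-\gamma/H)>0$ and a constant independent of $x$ (all constants in $\widetilde{\EuScript M}(H)$ are uniform in $x$ by the definition of $\a$); the auxiliary parameter $\ep\in(0,1)$ plays no role.

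I would expect the main obstacle to be the maximal inequality itself --- the passage from the marginal estimate to the supremum --- and in particular verifying that the first-passage time behaves correctly at the boundary and that the strong Markov decomposition is valid. The remaining steps, substituting $a=t^{\gamma}/2$ and checking that the scaling constraint $h\,a^{-1/H}<1$ of $\widetilde{\EuScript M}(H)$ is met for small $t$, are then routine.
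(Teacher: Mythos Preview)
Your proof is correct and follows essentially the same route as the paper. The paper invokes an Ottaviani-type maximal inequality (quoted from Gikhman--Skorohod and Manstavi\v cius) of the form
\[
\PP^x\Big\{\sup_{0\le s\le h}|X(s)-X(0)|>a\Big\}\le\frac{\PP^x\{|X(h)-X(0)|>a/2\}}{1-\a(h,a/2)},
\]
whereas you re-derive a variant, $\PP^x\{\sup_{0\le s\le t}|X(s)-x|\ge 2a\}\le 2\,\a(t,a)$, directly via the first-passage/strong Markov decomposition; the two inequalities differ only in bookkeeping (you bound $\PP^x\{\sigma\le t\}$ by $1$ in the second term, the cited inequality instead solves for it), and both reduce the problem to the marginal estimate. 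After that, the substitution $a=t^{\gamma}/2$ and the check that the scaling constraint $t\,a^{-1/H}<1$ holds for small $t$ when $\gamma<H$ are identical in both arguments, yielding $\eta=\beta(1-\gamma/H)$.
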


\begin{proof}\ We make use of the following Ottaviani-type inequality
(cf. Gikhman and Skorohod \cite[page 420]{GS74},
or Manstavi\v cius \cite{Man04}): For all $x \in \R^d$, all $h >0$ and
$a >0$ such that $\a(h, a/2) <1$,
\begin{equation}\label{Eq:Ott}
\PP^x\bigg\{ \sup_{0 \le s\le h} |X(s) - X(0)| > a \bigg\} \le
\frac{\PP^x\big\{ |X(h) - X(0)| > a/2 \big\}} {1 - \a(h, a/2)}.
\end{equation}
For any $\gamma \in (0, H)$, it follows
from (\ref{Eq:Ott}) with $a = h^\gamma$  and (\ref{Def:Mclass}) that for
$h$ small enough,
\begin{equation}\label{Eq:Ott2}
\begin{split}
\PP^x \bigg\{ \sup_{0 \le s\le h} |X(s) - X(0)| > h^\gamma \bigg\} &\le
C \PP^x\Big\{ |X(h) - X(0)| > h^\gamma/2 \Big\} \le Ch^{\beta(1 - \frac \gamma H)}.
\end{split}
\end{equation}
This proves the proposition.
\end{proof}
\vskip 0.2cm

Most examples given in Section 4 are L\'evy or L\'evy-type processes.
For these processes, the maximal tail probability $\mathbb{P}^x
\big\{\sup_{s\in[0,t]}|X_s-x|\geq r\big\}$ in (\ref{Eq:Ott2}) has
been studied by several authors.  Pruitt \cite{Pru81} established
an upper bound for the maximal probability for a general L\'evy
process in terms of its L\'evy measure. Schilling \cite{Schilling 1998}
and B\"ottcher et al. \cite{BSW13}  extended Pruitt's result to
L\'evy-type processes and proved an upper bound in terms of the
symbol of the process. The following proposition is taken from
\cite[Corollary 5.2]{BSW13},
which can be applied to verify (A1) for L\'evy-type processes.
We remark that K\"uhn \cite[Lemma 3.2]{Kuhn2017a} has proved recently that
the inequality (\ref{eq Maximal estimate}) still holds if $t$ is
a stopping time, with the $t$ on the right-hand side replaced by $\E(t)$.
We thank the referee for pointing out these results to us.

\begin{proposition}\label{Prop:C1-1}
Let $X = \{X(t), t\geq 0\}$ be a L\'evy-type process with a symbol $q(x,\xi):\mathbb{R}^d\times
\mathbb{R}^d\rightarrow \mathbb{C}$ given by
$$
q(x,\xi)= -ib(x)\cdot \xi +\frac{1}{2}\xi\cdot Q(x)\xi +\int_{\mathbb{R}^d \setminus  \{0\}}
\big(1-e^{iy\cdot\xi}+iy\cdot\xi 1_{(0,1]}(|y|) \big)\nu(x,dy),
$$
where for each fixed $x\in\mathbb{R}^d$, $(b(x),Q(x),\nu(x,dy))$ is a L\'evy triplet, i.e.
$b(x)\in\mathbb{R}^d$, $Q(x)\in\mathbb{R}^{d\times d}$ is a symmetric positive semidefinite matrix
and $\nu(x,dy)$ is a measure on $(\mathbb{R}^d \setminus \{0\},\mathcal{B}(\mathbb{R}^d \setminus  \{0\}))$
such that $\int_{\mathbb{R}^d \setminus  \{0\}}(|y|^2\wedge 1)\nu(x,dy)<\infty$.
Then,  there exists a constant $C>0$ such that
\begin{eqnarray}\label{eq Maximal estimate}
\mathbb{P}^x\bigg\{\sup_{s\in[0,t]}|X_s-x|\geq r\bigg\}\leq Ct\sup_{|y-x|\leq r}\sup_{|\xi|\leq 1/r}|q(y,\xi)|.
\end{eqnarray}
%Moreover in the L\'evy case $q$ can be replaced by $\Re\, \psi(\xi)$, which denotes the real term of the L\'evy exponent $\psi(\xi)$.
\end{proposition}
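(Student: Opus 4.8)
The plan is to convert the maximal inequality into a first-passage bound and then couple a martingale identity coming from the symbol with an averaging over the Fourier variable $\xi$; this is the strategy of Schilling and of \cite{BSW13}, and I would reproduce it as follows. First I would introduce the first exit time from the ball,
\[
\tau = \inf\{s\ge 0 : |X_s - x| \ge r\},
\]
and observe that $\{\sup_{s\in[0,t]}|X_s - x|\ge r\} = \{\tau\le t\}$, so it suffices to bound $\mathbb{P}^x\{\tau\le t\}$.

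The analytic heart of the argument is that for each fixed $\xi$ the exponential $e_\xi(y) = e^{i\xi\cdot y}$ is mapped by the generator $A$ of $X$ to $A e_\xi = -q(\cdot,\xi)\, e_\xi$. Feeding this into Dynkin's formula, stopping at the bounded time $\tau\wedge t$, and normalizing by $e^{i\xi\cdot x}$ gives
\[
\mathbb{E}^x\big[e^{i\xi\cdot(X_{\tau\wedge t}-x)}\big] - 1 = -\,\mathbb{E}^x\Big[\int_0^{\tau\wedge t} q(X_s,\xi)\, e^{i\xi\cdot(X_s - x)}\,ds\Big].
\]
Taking real parts, using $|e^{i\xi\cdot(X_s-x)}| = 1$, and noting that $|X_s - x|\le r$ for every $s<\tau$, I would bound the right-hand side by $t\sup_{|y-x|\le r}|q(y,\xi)|$, so that for every $\xi$,
\[
1 - \RE\,\mathbb{E}^x\big[e^{i\xi\cdot(X_{\tau\wedge t}-x)}\big] \le t\,\sup_{|y-x|\le r}|q(y,\xi)|.
\]

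Next I would average this over the ball $\{|\xi|\le 1/r\}$. The geometric lemma I need is that there is a dimensional constant $\kappa_d>0$ with $\int_{|\xi|\le 1/r}(1-\cos(\xi\cdot z))\,d\xi \ge \kappa_d\, r^{-d}$ whenever $|z|\ge r$; this follows by the substitution $\xi = \eta/r$ and the fact that $\inf_{|w|\ge 1}\int_{|\eta|\le 1}(1-\cos(\eta\cdot w))\,d\eta>0$, the integrand being nonnegative and the integral tending to the volume of the unit ball as $|w|\to\infty$ by Riemann--Lebesgue. Since $1-\RE\,e^{i\xi\cdot z} = 1-\cos(\xi\cdot z)\ge 0$ and since on $\{\tau\le t\}$ one has $|X_{\tau\wedge t}-x| = |X_\tau - x|\ge r$ (a jump may overshoot the sphere, which only helps), Fubini and this lemma give
\[
\int_{|\xi|\le 1/r}\Big(1 - \RE\,\mathbb{E}^x\big[e^{i\xi\cdot(X_{\tau\wedge t}-x)}\big]\Big)\,d\xi \ge \mathbb{E}^x\big[1_{\{\tau\le t\}}\,\kappa_d\, r^{-d}\big] = \kappa_d\, r^{-d}\,\mathbb{P}^x\{\tau\le t\}.
\]
Integrating the pointwise upper bound over the same ball, whose volume is $\omega_d r^{-d}$, produces $\omega_d r^{-d}\, t\sup_{|y-x|\le r,\,|\xi|\le 1/r}|q(y,\xi)|$. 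The factors $r^{-d}$ cancel, leaving $\mathbb{P}^x\{\tau\le t\}\le (\omega_d/\kappa_d)\, t\sup_{|y-x|\le r,\,|\xi|\le 1/r}|q(y,\xi)|$, which is \eqref{eq Maximal estimate} with $C = \omega_d/\kappa_d$.

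The hard part will be the martingale identity in the second step, because $e_\xi$ is bounded but not compactly supported and so does not in general lie in a convenient core such as $C_c^\infty$ on which $A$ is manifestly a pseudodifferential operator with symbol $-q$. I would handle this either by approximating $e_\xi$ with $C_c^\infty$ functions that agree with it on a large ball and controlling the error from the nonlocal (jump) part of $A$ up to the localizing time $\tau$, or by invoking the probabilistic characterization of the symbol, $\lim_{h\downarrow 0} h^{-1}\,\mathbb{E}^x[e^{i\xi\cdot(X_h - x)} - 1] = -q(x,\xi)$, and assembling the Dynkin identity from it. Everything after that identity is the clean cancelling computation above, and the only property of $q$ actually used is its boundedness on $\{|y-x|\le r\}\times\{|\xi|\le 1/r\}$.
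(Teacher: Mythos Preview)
The paper does not give its own proof of this proposition; it is quoted verbatim from \cite[Corollary 5.2]{BSW13}. Your argument is precisely the Schilling/B\"ottcher--Schilling--Wang proof (Dynkin's formula for $e_\xi$, then averaging $1-\cos(\xi\cdot z)$ over $\{|\xi|\le 1/r\}$), so the proposal is correct and coincides with the source the paper cites.
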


\vskip 0.2cm

Our main theorem of this paper is the following uniform Hausdorff and
packing dimension result for the images of $X$.

\begin{thm} \label{main theorem}
Let $X=\{X(t), t \in \R_+, \PP^x\}$ be a time homogeneous Markov
process in $\mathbb{R}^d$ and satisfies Conditions (A1). Assume either
(i) $1 < H d$ and (A2) hold; or (ii) $1 = H d$ and (A3) hold.
Then for all $x \in \R^d$,
%$\mathbb{P}^x$-a.s., for every Borel set $F\in [0, \infty)$,
\begin{equation}\label{Eq:HdimU}
\mathbb{P}^x\Big\{\dimh X(E)= \frac{1}{H}\dimh E \ \text{for all Borel sets}
\  E \subseteq [0, \infty)\Big\}=1
\end{equation}
and
\begin{equation}\label{Eq:PdimU}
\mathbb{P}^x\Big\{\dimp X(E)= \frac{1}{H}\dimp E \ \text{for all Borel sets}
\  E \subseteq [0, \infty)\Big\}=1.
\end{equation}
\end{thm}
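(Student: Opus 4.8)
The plan is to obtain both displays \eqref{Eq:HdimU} and \eqref{Eq:PdimU} by proving, on a single $\PP^x$-null set, the matching uniform bounds $\dimh X(E)\le \frac1H\dimh E$ and $\dimh X(E)\ge \frac1H\dimh E$ together with their packing analogues, valid for all Borel $E\subseteq[0,\infty)$ at once. Note that $Hd\ge 1$ forces $\frac1H\dimh E\le \frac1H\le d$, so no truncation by $d$ occurs and the stated clean formula is consistent. For the upper bounds I would fix $\gamma\in(0,H)$ and feed (A1) into Lemma \ref{lem 2.2}: take $t_n=2^{-n}$, $\theta_n=t_n^\gamma$, and let $\mathcal{C}_n$ be the dyadic intervals of length $t_n$ meeting $[0,N]$, so that \eqref{max} holds with $\delta=\eta$ and $\log N_n=O(1)|\log t_n|$. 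The lemma then produces a null set off which, for all large $n$, every $I\in\mathcal{C}_n$ has $X(I)$ covered by $K_2$ balls of radius $t_n^\gamma$. Covering $E\subseteq[0,N]$ economically by dyadic intervals and applying this gives $\dimh X(E)\le \frac1\gamma\dimh E$; the same count gives $\overline{\mathrm{dim}}_{\mathrm B}X(E)\le\frac1\gamma\overline{\mathrm{dim}}_{\mathrm B}E$, and the decomposition characterisation $\dimp A=\inf\{\sup_i\overline{\mathrm{dim}}_{\mathrm B}A_i:A\subseteq\bigcup_iA_i\}$ upgrades this to $\dimp X(E)\le\frac1\gamma\dimp E$. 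A countable union over $N\in\N$ and $\gamma\uparrow H$ yields the uniform upper bounds.

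For the lower bound in the supercritical case $Hd>1$ I would verify the delayed-hitting estimate \eqref{min} from (A2) by an occupation-time argument. Writing $\sigma=\inf\{s\ge t_n:|X_s-x|\le r_n\}$, the lower estimate \eqref{Eq:P1} shows that from any entrance point $X$ spends expected time at least $c\,\rho$ inside $B(x,r_n)$ over a window of length $\rho=r_n^{1/(H-\zeta)}$, while \eqref{Eq:P2} bounds the total expected occupation,
\[
\mathbb{E}^x\int_{t_n}^{T+\rho}\mathbf 1_{B(x,r_n)}(X_s)\,ds\ \le\ C_2\,r_n^{d-\ep}\int_{t_n}^{T+\rho}s^{-(H+\zeta)(d-\ep)}\,ds .
\]
Dividing and choosing the delay $t_n=r_n^{\kappa}$ with $\kappa$ slightly below $1/H$, the resulting exponent of $r_n$ is positive (it equals $1/H$ exactly when $\ep=\zeta=0$, precisely because $Hd>1$), so \eqref{min} holds with some $\delta>0$. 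Lemma \ref{lower bounds}, applied with $\mathcal{D}_n$ the dyadic cubes of diameter $r_n$ in a bounded region, then covers $X^{-1}(B)\cap[0,T]$ by $K_4$ intervals of length $t_n$, uniformly in $B$. Pulling back any ball-cover of $X(E)$ through this and using $t_n^{\beta/\kappa}=r_n^\beta$ yields $\dimh(E\cap[0,T])\le\frac1\kappa\dimh X(E)$; letting $T\uparrow\infty$ and $(\ep,\zeta)\to 0$ along $J$ (so $\kappa\uparrow 1/H$) gives $\dimh X(E)\ge\frac1H\dimh E$.

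The packing lower bound follows along the same covering, now at the level of box dimension: the uniform conclusion of Lemma \ref{lower bounds} gives $\overline{\mathrm{dim}}_{\mathrm B}(E'\cap[0,T])\le\frac1\kappa\overline{\mathrm{dim}}_{\mathrm B}X(E')$ for every $E'$. Given any decomposition $X(E)=\bigcup_iF_i$, I would set $E_i=E\cap X^{-1}(F_i)$, so $E=\bigcup_iE_i$ and $X(E_i)\subseteq F_i$; applying the box-dimension inequality to each $E_i$ and the decomposition characterisation of $\dimp$ gives $\dimp(E\cap[0,T])\le\frac1\kappa\dimp X(E)$, whence $\dimp X(E)\ge\frac1H\dimp E$ after the same limits. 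Crucially, the null set from Lemma \ref{lower bounds} is independent of $E$ and of the chosen decomposition, so all these statements hold simultaneously.

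The main obstacle is the critical case $Hd=1$, where the occupation integral is logarithmically divergent and the exponent of $r_n$ no longer stays positive, so \eqref{min} cannot be verified for $X$ directly. Here I would introduce an independent $\sigma$-stable subordinator $S$ with $\sigma\in(0,1)$ and work with $Y=X\circ S$, whose effective index is $H_Y=H/\sigma$, so that $H_Yd=1/\sigma>1$ is strictly supercritical. Condition (A3) is exactly what is needed to propagate the upper bound \eqref{Eq:P2} to $Y$, since the transition probability of $Y$ integrates that of $X$ against the law of $S_t$ over all times $s>0$. The supercritical case then applies to $Y$, and I would transfer the lower bound back to $X$ via $X(E\cap\mathrm{range}(S))=Y(S^{-1}(E\cap\mathrm{range}(S)))$, combined with the uniform dimension result for the subordinator $S$ (which gives $\dimh(E\cap\mathrm{range}(S))=\sigma\,\dimh S^{-1}(\cdot)$), so that the factors $\sigma$ cancel and $\dimh X(E)\ge\frac1H\dimh(E\cap\mathrm{range}(S))$. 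Using the co-dimension relation $\dimh(E\cap\mathrm{range}(S))\ge\dimh E-(1-\sigma)$ and letting $\sigma\uparrow 1$ recovers $\dimh X(E)\ge\frac1H\dimh E$. Making this transfer uniform over all Borel $E$ simultaneously is the delicate point and the crux of the argument.
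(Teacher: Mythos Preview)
Your proposal is correct and follows essentially the same route as the paper: Lemma~\ref{lem 2.2} for the uniform upper bound, the ratio-of-occupation-times estimate (which is exactly the paper's use of \cite[Proposition~2.1]{X98} extending \cite[Theorem~1.1]{K97}, yielding \eqref{delay hitting}) to feed Lemma~\ref{lower bounds} for the lower bound when $Hd>1$, and the stable-subordinator trick when $Hd=1$. Your packing-dimension argument via the box-dimension decomposition formula is more explicit than what the paper gives (the paper simply says ``similar and hence omitted'').

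The one loose end is the step you yourself flag as ``the delicate point and the crux'': making the subordination transfer uniform in $E$. The paper's resolution is clean and worth knowing. Work on the product space $\Omega\times\Omega'$ with $X$ defined on $\Omega$ and the subordinator $S$ on $\Omega'$, independent. The uniform lower bound for $Y=X\circ S$ (which only needs the verified (A2$'$), not (A1), for $Y$) gives, for a.e.\ $(\omega,\omega')$, the inequality $\dim X(E)(\omega)\ge\frac{\sigma}{H}\dim S^{-1}(E)(\omega')$ simultaneously for all Borel $E$. By Fubini, for a.e.\ $\omega$ the $\omega'$-section of this event has full $\PP'$-measure; for each such $\omega$ and every $E$ one may therefore pass to the essential supremum over $\omega'$ on the right:
\[
\dim X(E)(\omega)\ \ge\ \frac{\sigma}{H}\,\big\|\dim S^{-1}(E)\big\|_{L^\infty(\Omega')}.
\]
Hawkes' theorem \cite{Hawkes71} identifies this $L^\infty$-norm as $\frac{\sigma+\dim E-1}{\sigma}$, which is precisely your co-dimension relation in essential-supremum form; letting $\sigma\uparrow 1$ along a countable sequence finishes. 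The key observation you were missing is that independence decouples the $\omega$-randomness of $\dim X(E)$ from the $\omega'$-randomness of $\dim S^{-1}(E)$, so the passage to $\|\cdot\|_\infty$ is legitimate \emph{uniformly} in $E$.
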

%\begin{rem} When $1 = H d$, extra condition such as  (A3) is needed in order
%for \eqref{Eq:PdimU} to hold. One can construct example of L\'evy processes
%such that it hits points.
%\end{rem}
\begin{proof}%{\it of Theorem \ref{main theorem}}
We will only prove the Hausdorff dimension result (\ref{Eq:HdimU}).
The proof of \eqref{Eq:PdimU}, which is based on the connection between packing
dimension and upper box-counting dimension (cf. \cite{Fal}), is similar and
hence omitted.

The proof of  is divided into two parts. Namely, we prove the upper
and lower bounds for $\dimh X(E)$, respectively.
%It suffices to prove that with probability 1 the following lower and upper bounds hold:
%$$\dimh X(E) \le \frac{1}{H}\dim E, \quad \quad \dim X(E) \ge \frac{1}{H}\dim E.$$

\vskip 2mm
{\it Part 1 (Uniform upper bound)}.  By the $\sigma$-stability of
Hausdorff dimension (cf. \cite{Fal}), it suffices to consider Borel sets
$E\subseteq [0, L]$ for all fixed integers $L$. For simplicity, we
take $L = 1$ in this proof. Let $ \gamma \in (0, H)$ be a constant,
$t_n=2^{-n}$ and
$$
\mathcal{C}_n= \big\{ [(j-1)t_n, j t_n]: j=1,2,\ldots, 2^n \big\}.
$$
By Condition (A1)  we get that for all $x \in \R^d$,
\begin{eqnarray*}
\mathbb{P}^{x}\left\{\sup_{0\leq s\leq t_n}|X(s)-x|\geq  t_n^{\gamma}\right\}
\leq C t_n^{\eta}.
\end{eqnarray*}
Thus, by Lemma \ref{lem 2.2}, with probability one under $\mathbb{P}^x$, as
$n$ is sufficiently large, $X(I)$ can be covered by $K_2$ balls of radius
$\theta_n:= t_n^{\gamma}$ for all intervals $I\in \mathcal{C}_n$.
\vskip 3mm

Let $\chi=\dimh E$. Then, for any $\delta>0$, there exists a sequence of intervals
$\{F_i\subseteq [0,1],\, i\in\mathbb{N}\}$ such that $d_i:={\rm diam}(F_i)\le \ep$,
\Be  \label{e:HauDef}
E \subseteq \bigcup_{i}F_i \ \ \text{ and }\ \   \sum_i d_i^{\chi+\delta}\le 1.
\Ee
We choose $n_i$ so that $\frac{t_{n_i}}2\leq d_i\leq t_{n_i},$
thus $F_i$ is  contained in at most two intervals of $\mathcal{C}_{n_{i}}$.
Consequently, $X(F_i)$ can be covered by $2K_2$ balls of radius $\theta_{n_i}$,
which are denoted by $B_{i,1}, ..., B_{i,2K_2}$.
Hence,
$$X(E)\subseteq\bigcup_{i}\bigcup^{2K_2}_{j=1}B_{i,j}.$$
Further observe that, by using \eqref{e:HauDef}, we have
\Bes
\begin{split}
%\sum_{i} \left[{\rm diam}(F_i)\right]^{\frac{\chi+\delta}r} &
%\le \left((2K)^{r(\chi+\delta)-1} \wedge 1\right)
\sum_{i} \sum_{j=1}^{2K_2} \left[{\rm diam}(B_{i,j})\right]^{(\chi+\delta)/\gamma}
& \le 2K_2\, \sum_{i} t_{n_i}^{\chi+\delta}  \\
& \le 2K_2\, \sum_{i} (2d_i)^{\chi+\delta}  \le 2^{1 +\chi+\delta} K_2.
\end{split}
\Ees
This yields $\dim X(E)\leq (\chi+\delta)/\gamma$. Letting $\delta\downarrow 0$
and $ \gamma \uparrow H$ yields
$$\dim X(E)\leq \frac{1}{H}\dim E.$$
\vskip 2mm

{\it Part 2 (Uniform lower bound)}. In order to prove
\begin{equation}\label{Eq:HdimL}
\mathbb{P}^x\Big\{\dimh X(E)\ge \frac{1}{H}\dimh E \ \text{for all Borel sets}
\  E \subseteq [0, \infty)\Big\}=1,
\end{equation}
we will treat the two cases (i) $1 < H d$ and  (ii) $1 = H d$ separately.

We observe that the  inequality in (\ref{Eq:HdimL}) follows from the
following claim: For all $x \in \R^d$,
\begin{equation}\label{Eq:LB1}
\PP^x \big\{\dim X^{-1}(F) \le {H}\dim F \ \ \hbox{ for all }\, F\subseteq \R^d\big\} = 1,
\end{equation}
by taking $F = X(E)$. Moreover, by the $\sigma$-stability of Hausdorff dimension,
(\ref{Eq:LB1}) is equivalent to: For all constants $T >0$ and all integers $m = 1, 2, ...$,
\begin{equation}\label{Eq:LB2}
\PP^x \Big\{\dim \big(X^{-1}(F)\cap [0, \,T]\big) \le {H}\dim F \ \ \hbox{ for all }\,
F\subseteq [-m, m]^d \Big\} = 1.
\end{equation}
Hence, in order to prove (\ref{Eq:HdimL}) for the case (i), it suffices to prove
\eqref{Eq:LB2} for  all fixed constant $T>0$ and positive integer $m$. This will be
done by using the covering principle in Lemma \ref{lower bounds}. Recall that in case
(i) we assume (A2) holds. Specifically,  \eqref{Eq:P1} and \eqref{Eq:P2}
hold for all $0 < t \le 2T$.

By applying \cite[Proposition 2.1]{X98}, which is an extension of Theorem 1.1 in \cite{K97},
we get
\begin{equation}  \label{delay hitting}
\mathbb{P}^{x}\Big\{\inf_{t \le s \le T} |X(s)-x|\leq r\Big\}
\leq \frac{\int^{2T}_{t}P(s, x, B(x,r))ds}{\inf_{|y-x|\leq r}
\int^{T-t}_{0}P(s, y, B(x,r))ds}.
%\end{split}
\end{equation}
%Now, we first consider the case when $1 < Hd$, and
In order to estimate the denominator of (\ref{delay hitting}), we assume without
loss of generality that $0 < t \le T/2$.
%Recall $\eta_0$ in condition (A2). %in \textbf{Assumption \ref{assumption}}.
For any $y\in\mathbb{R}^d$ with $|y-x|\leq r\leq r_0$, we use (\ref{Eq:P1})
in (A2) to get
\begin{eqnarray*}
\int^{T-t}_{0}P(s,y, B(x,r))ds \geq C_1 \int^{T/2}_{0} \min\bigg\{1,\,
\Big(\frac{r} {s^{H-\zeta}}\Big)^{d+\ep}\bigg\}\,ds
%&\geq&\int^{\infty}_{0}P_1(1,s^{-\alpha}y, B(s^{-\alpha}x,s^{-\alpha}r))ds\\
%&\geq& k_1\int^{\infty}_{(\eta^{-1}_{0}r)^{1/\alpha}}(s^{-\alpha}r)^{\beta}ds\\
\ge C r^{\frac 1 {H-\zeta}}.%\frac {k_1\eta_0^{\beta-\frac{1}{\alpha}}}
%{\alpha\beta-1}r^{\frac{1}{\alpha}},
\end{eqnarray*}
%where the first and third inequalities hold by the fact of condition (A2)
%and the first part of condition (A2) respectively.
%Then we have
%\begin{eqnarray*}
%\inf_{|y-x|\leq r}\int^{\infty}_{0}P(s, y, B(x,r))ds\geq
%\frac{k_1\eta_0^{\beta-\frac{1}{\alpha}}}{\alpha\beta-1}r^{\frac{1}{\alpha}}.
%\end{eqnarray*}
On the other hand, since $1 < Hd$, we choose $(\ep, \zeta) \in J$ such that
$\ep$ and $\zeta$ are small enough so that $1 < (H+ \zeta)(d-\ep)$. By using
(\ref{Eq:P2}) in (A2) we derive that for any $t > r^{1/(H+\zeta)}$,
\[
\begin{split}
\int^{2T}_{t}P(s, x, B(x,r))ds &\leq  C_2
\int^{2T}_{t} \min\bigg\{1,\, \Big(\frac{r}
{s^{H+\zeta}}\Big)^{d-\ep}\bigg\}\, ds\\
&= C_2 \int^{2T}_{t}  \Big(\frac{r}
{s^{H+\zeta}}\Big)^{d-\ep}\, ds \leq C\, r^{d-\ep} t^{1 - (H+\zeta)(d-\ep)}.
%\int^{\infty}_{t}P_2(1, s^{-\alpha}x, B(s^{-\alpha}x, s^{-\alpha}r))ds\\
%&\leq&  k_2\int^{\infty}_{t}(s^{-\alpha}r)^{\beta}ds\\
%&\leq& \frac{ k_2 r^{\beta}}{\alpha\beta-1}t^{-\alpha\beta+1},
\end{split}
\]
%Here we have used the fact that $\ep>0$ is chosen to be small enough so that  $1 < H(d-\ep)$.
%we where the first and third inequalities hold by condition (A2) and the second
%part of condition (A3) respectively.
We combine the above with \eqref{delay hitting} to see that
for all $x\in\mathbb{R}^d$, $r > 0$ and $t > r^{1/(H+\zeta)}$,
\begin{equation}  \label{delay hitting probability}
\mathbb{P}^{x}\left\{\inf_{t \le s \le T} |X(s)-x|\leq r\right\}
\leq C\, r^{d- \frac 1 {H-\zeta}-\ep} t^{1 - (H+\zeta) (d-\ep)}.
\end{equation}
%When $1 = Hd$, we use condition (A3) to derive
%\begin{equation} \label{Eq:crit1}
%\int^{T-t}_{0}P(s,y, B(x,r))ds \geq C_3 \int^{T/2}_{0} \min\bigg\{1,\,
%\Big(\frac{r} {s^{H}}\Big)^{d}\bigg\}\,ds
%&\geq&\int^{\infty}_{0}P_1(1,s^{-\alpha}y, B(s^{-\alpha}x,s^{-\alpha}r))ds\\
%&\geq& k_1\int^{\infty}_{(\eta^{-1}_{0}r)^{1/\alpha}}(s^{-\alpha}r)^{\beta}ds\\
%\ge C r^{d} |\log r|%\frac {k_1\eta_0^{\beta-\frac{1}{\alpha}}}
%{\alpha\beta-1}r^{\frac{1}{\alpha}},
%\end{equation}
%and
%\begin{equation} \label{Eq:crit2}
%\begin{split}
%\int^{2T}_{t}P(s, x, B(x,r))ds &\leq C_4 \int^{2T}_{t} \min\bigg\{1,\, \Big(\frac{r}
%{s^{H}}\Big)^{d}\bigg\}, ds\\
%&\leq C r^d\, |\log t|.
%\end{split}
%\end{equation}
%Therefore, in the case of $1 = Hd$, we have that for all
%$x\in\mathbb{R}^d$,
%\begin{eqnarray}
%\mathbb{P}^{x}\left\{\inf_{t \le s \le T} |X_s-x|\leq r\right\}
%\leq C\, \frac{|\log t|}{|\log r|}.
%\frac{k_2 t^{-\alpha\beta+1}}
%{ k_1\eta_0^{\beta-\frac{1}{\alpha}}}r^{\beta-\frac{1}{\alpha}}
%\label{delay hitting probability2}
%\end{eqnarray}
%{\bf So more care will be needed in this case. I will give more details.}

%\vskip 2mm
Now we proceed to prove \eqref{Eq:LB2} for the case $1 < Hd$.
Fix any integer $m \ge 1$ and let $\mathcal{D}^m_n$
be the collection of dyadic cubes in $[-m, m]^d$ of the form
$\prod_{i=1}^d [j_i2^{-n},\, (j_i+1){2^{-n}}]$,
where each $j_i$ is an integer and $j_i\in [-m2^n, m2^n-1]$. It is
easy to see that $\sharp(\mathcal{D}^m_n)=(m2^{n+1})^d.$

Fix a constant $\gamma\in(0, 1/H)$ and then choose $\varepsilon>0$
and $\zeta>0$ sufficiently small  such that  $(\ep, \zeta) \in J$ and
$$
\delta:= \big(d - \frac 1 {H-\zeta} -\ep\big) \big(1 -  \gamma (H+\zeta) \big) -
\frac{2 \gamma \zeta}{H - \zeta} >0.
$$
This is possible since $ (d - \frac 1 H) (1 - \gamma H)> 0$.

Let $r_n=2^{-n}$ and $t_n=2^{-\gamma n}$,
further choose $n$ large enough so that $r_n\leq r_0$. Notice that
$t_n > r_n^{1/(H+\zeta)}$. By \eqref{delay hitting probability} we
verify that for all $x\in\mathbb{R}^d$,
\Be  \label{e:LowBou1}
\mathbb{P}^{x}\left\{\inf_{t_n\leq s\le T}|X_s-x|\leq r_n \right\}\leq
C r_n^{d- \frac 1 {H-\zeta}-\ep} t_n^{1 - (H+\zeta)(d - \ep)} =  C\,r_n^\delta.
\Ee
%where $\delta > 0$ is a constant depending on $H, \ep, d$ and $\gamma$.
Hence, Lemma \ref{lower bounds} implies that there
is an integer $K_4$ such that with $\mathbb P^x$-probability one,
for all $n$ large enough (say, $n \ge n_0$) and all dyadic cubes
$B\in\mathcal{D}^m_n$, $X^{-1}(B) \cap [0, T]$ can be covered by at most $K_4$
intervals of length $t_n=2^{-\gamma n}$.
\vskip 2mm

For any Borel set $F \subseteq [-m, m]^d$, let $\theta > \dim F$. Then
there exists a sequence of dyadic cubes $\{B_i, \, i\in\mathbb{N}\}$ in
$[-m, m]^d$ of sides $r_{n_i}= 2^{-n_i}$ such that  $n_i \ge n_0$,
\Be  \label{e:HauDef2}
F \subseteq \bigcup_{i=1}^\infty B_i \ \ \text{ and }\ \
\sum_{i=1}^\infty r_{n_i}^{\theta}\le 1.
\Ee

%We choose $n_i$ so that $\frac{t_{n_i}}2\leq d_i\leq t_{n_i},$
%For any $E \subseteq \mathbb{R}_{+}$, define
%$$
%E_m=\{t\geq 0: X_t\in X(E), |X_t|\leq m\},
%$$
%it is clear that $E_m \uparrow E'$ with $E'=\{t\geq 0: X_t\in X(E)\}$.
%For any $\varepsilon>0$ and $\theta>\dim X(E_m\cap E)$, we cover
%$X(E_m\cap E)$ by sets $F_i$
%with $d_i:=$ diam ${F_i}<\varepsilon$ satisfying
%$$
%\sum_{i}(d_i)^{\theta}<\varepsilon.
%$$
%Choose $n_i$ such that
%$$
%\frac{1}{2^{n_i+1}}\leq d_i\leq \frac{1}{2^{n_i}},
%$$
Since, for every $i$, $X^{-1}(B_i)\cap [0, T]$ can be covered by $K_4$ intervals
$I_{ij}$ of length $t_{n_i}=2^{-\gamma n_i}$, we see that
$$ X^{-1}(F)\cap [0, T] \subseteq \bigcup_{i=1}^\infty \bigcup_{j=1}^{K_4} I_{ij}.$$
%$X^{-1}(F_i)$
%can be covered by $K2^d$ intervals of length $2^{-\gamma n_i}$. which further
%gives
%$$E_m\cap E\subseteq \bigcup_i F_i \subset \bigcup_{i}\bigcup^{K2^d}_{j=1}I_{ij},$$
%where $I_{ij}$ are the covering intervals of length $2^{-\gamma n}$.
Moreover,
\begin{eqnarray*}
\sum_{i=1}^\infty \sum_{j=1}^{K_4} \big[{\rm diam} (I_{ij})\big]^{\theta/\gamma}
\le K_4\sum_{i=1}^\infty r_{n_i}^{\theta}\leq K_4,
\end{eqnarray*}
this implies $\dim \big(X^{-1}(F)\cap [0, T] \big) \leq \frac{\theta}{\gamma}$.
Letting $\gamma\uparrow {1}/{H}$ and $\theta\downarrow \dim F$ yields
\eqref{Eq:LB2} and, thus,  (\ref{Eq:HdimL}) for the case $1 < H d$.

Finally we prove (\ref{Eq:HdimL}) for the case $1 = H d$ by making use of  a
``subordination argument'' that is similar to that in Hawkes \cite{HJ} (see also
Pruitt \cite{Pruitt1975}).

Let $\tau = \{\tau_t, t \ge 0\}$ be a stable subordinator with stability
index $\rho\in(0, 1)$, and independent of the process $X$.  Consider the
Markov process $Y = \{Y_t, t \ge 0\}$ defined by  $Y_t = X(\tau_t)$. It is
easy to see $\{Y_t,\, t\ge0\}$ is a time-homogeneous strong Markov process.
Denote the transition probability of $Y$ by $\tilde{P}(t,x, A) :=\mathbb{P}^x
(Y_t\in A)$. We claim that, if (A3) holds, then $\tilde{P}(t,x, A)$ satisfies
Condition (A2) with $H$ replaced by $H/\rho$. Consequently, because
$1 < Hd/\rho$, we can apply the conclusion of the first part to the process $Y$.

More specifically, we now verify the following claim under assumption (A3):

\vskip 2mm
(A2$'$)\, We can find a sequence $J' = \{(\ep', \zeta')\}$ of arbitrarily
small numbers with the following property: For any  $(\ep',\zeta') \in J'$
and $A > 0$, there exist positive constants $C_3$ and $C_4 $ such that
for all $ 0< r\le r_0$
(as in (A2)), $x, y \in \R^d$ with $ |y-x|\leq r$ and $0 < t  \le A$,
\begin{equation}\label{Eq:P5}
\tilde{P}(t,y,B(x,r)) \ge  C_3 \min\left\{1, \,  \left(\frac{r} {t^{{(H-\zeta')}/{\rho}}}\right)^{d+\ep'}\right\};
\end{equation}
and
%\item[(ii)]\, for all  $x \in \R^d$, $ 0< r\le r_0$ and $t > 0$,
\begin{equation}\label{Eq:P6}
\tilde{P}(t,x,B(x,r)) \le  C_4 \min\left\{1,\,  \left(\frac{r} {t^{{(H+\zeta')}/{\rho}}}\right)^{d-\ep'}\right\}.
\end{equation}
%\end{itemize}

To verify (A2$'$),  we denote, for all $t > 0$, the density function of
$\tau_t$ by $p_{\tau_t}$. Then the self-similarity  of $\tau_t$ implies
$p_{\tau_t}(s)=t^{-\frac{1}{\rho}} p_{\tau_1}(t^{-\frac{1}{\rho}}s)$.
Hence
\begin{equation}\label{Eq:tildeP}
\begin{split}
\tilde{P}\big(t,y,B(x,r))&=\PP^y (|Y_t-x|\leq r \big)\\
&=\int^{\infty}_0 \PP^y(|X_{\tau_t}-x|\leq r| \tau_t=s) p_{\tau_t}(s)ds\\
&=\int^{\infty}_0 P(s,y,B(x,r))p_{\tau_t}(s)ds\\
&=\int^{\infty}_0 P(s,y,B(x,r))t^{-\frac{1}{\rho}}p_{\tau_1}(t^{-\frac{1}{\rho}}s)ds\\
&=\int^{\infty}_0 P \left(t^{\frac{1}{\rho}}s,y,B(x,r) \right) p_{\tau_1}(s)ds.
\end{split}
\end{equation}
Consequently, we can make use of Condition (A3) to estimate
$\tilde{P}\big(t,y,B(x,r))$. Recall that $J = \{(\ep, \zeta)\}$ is
the sequence in  (A2).

On one hand, for any constant $A > 0$, we apply (\ref{Eq:P1}) with
$T \ge 2A^{1/\rho}$ to derive that for any $x, y \in \mathbb R^d$
with $|y-x| \le r$, $ 0<r \le r_0$ and $0 < t  \le A$, we have
\begin{equation*}
\begin{split}
\tilde{P}(t,y, B(x,r)) &\geq C_1 \int^{Tt^{-1/\rho}}_0 \min\left\{1, \, \left(\frac{r}
{ (t^{{1}/{\rho}}s)^{H-\zeta}} \right)^{d+\ep}\right\} p_{\tau_1}(s)ds\\
%&= \int^{\infty}_0 C_3 \min\left\{1, \left(\frac{r} {t^{H/{\rho} }}\right)^d
%\frac{1}{s^{Hd}}\right\} p_{\tau_1}(s)ds\\
&= C_1 \int^{r^{1/(H-\zeta)} t^{-1/{\rho}}}_0 p_{\tau_1}(s)\,ds \\
&\qquad +C_1\int^{Tt^{-1/\rho}}_{r^{1/(H- \zeta)} t^{-1/{\rho}}} \left(\frac{r}
{t^{{(H-\zeta)}/{\rho}}}\right)^{d+\ep}\frac{p_{\tau_1}(s)}{s^{(H- \zeta)(d+\ep)}}\,ds.
\end{split}
\end{equation*}

It is easy to see that when $r^{1/(H- \zeta)} >t^{1/{\rho}}$,
\begin{equation}\label{3.22}
\tilde{P}(t,y,B(x,r))\geq C_1\int^{1}_0 p_{\tau_1}(s)ds
\end{equation}
and when $r^{1/(H- \zeta)} \le t^{1/{\rho}}$,
\begin{equation}\label{3.23}
\begin{split}
\tilde{P}(t,y,B(x,r))&\geq C_1 \left(\frac{r}{t^{{(H- \zeta)}/{\rho}}}\right)^{d+\ep}
\int^{2}_{1}\frac{1}{s^{(H-\zeta)(d+\ep)}}  p_{\tau_1}(s)ds,
%&\ge C_1 \left(\frac{r}{t^{{(H- \zeta)}/{\rho}}}\right)^{d+\ep}
%\int^{Tt^{-1/{\rho}}}_{r_0^{1/(H- \zeta)} t^{-1/{\rho}}}
%\frac{1}{s^{(H-\zeta)(d+\ep)}}  ds,
\end{split}
\end{equation}
where we have used that fact that $Tt^{-1/\rho} \ge T A^{-1/\rho} \ge 2$
for all $0 < t \le A$ and the last integral is positive since the density
$p_{\tau_1}(s)$ is positive for $s > 0$.
Then, (\ref{3.22}) and (\ref{3.23}) imply (\ref{Eq:P5}) with $\ep' = \ep$
and $\zeta' = \zeta$.

On the other hand, similarly to \eqref{Eq:tildeP}, we use  (\ref{Eq:P2}),
which is now assumed to hold for all $t > 0$,
to derive that for any  $x\in \mathbb R^d$, $0<r \le r_0$ and $t > 0$,
\begin{equation}
\begin{split}\label{Eq:tildeP2}
\tilde{P}(t,x,B(x,r)) &\leq C_2 \int^{\infty}_0  \min\left\{1,\,  \left(\frac{r}
{(t^{{1}/{\rho}}s)^{H+ \zeta}}\right)^{d-\ep}\right\} p_{\tau_1}(s)ds\\
%&= \int^{\infty}_0 \min\left\{1,\, C_2 \left(\frac{r}{t^{H/{\rho}}}\right)^d
%\frac{1}{s}\right\} p_{\tau_1}(s)ds\\
&= C_2\int^{r^{1/(H+\zeta)} t^{-1/{\rho}}}_0 p_{\tau_1}(s)ds \\
&\qquad +C_2\int^{\infty}_{r^{1/(H+\zeta)} t^{-1/{\rho}}} \left(\frac{r}
{t^{{(H+ \zeta)}/{\rho}}}\right)^{d-\ep}\frac{  p_{\tau_1}(s)}{s^{(H+\zeta)(d-\ep)}}ds.
 \end{split}
\end{equation}
Since $\tilde{P}(t,y,B(x,r))\leq 1$, in order to verify (\ref{Eq:P6}), we
only need to consider the case when $ r^{1/(H+\zeta)} \le t^{1/{\rho}}$.
If $(H+\zeta)(d-\ep) < 1$, then by the boundedness of the density function
$p_{\tau_1}(\cdot)$, i.e., $\sup_{0<s\leq 1}p_{\tau_1}(s)\leq M$ for some $M>0$,
we obtain that
\begin{equation}\label{Eq:324}
\tilde{P}(t,x,B(x,r))\leq C_2 M\frac{r^{1/(H+ \zeta)}}{t^{1/{\rho}}}
%\bigg(1  + \int^1_{\big(\frac{r}{t^{{H}/{\rho}}}\big)^d}\frac{1}  {s}ds \bigg)
+ C_2 \left(\frac{r}{t^{{(H+\zeta)}/{\rho}}}\right)^{d-\ep}\int^{\infty}_{0}
\frac{p_{\tau_1}(s)} {s^{(H+ \zeta)(d-\ep)}}\,ds.
\end{equation}
The last integral is convergent at 0 because $(H+\zeta)(d-\ep) < 1$ and
at infinity because $p_{\tau_1}(s)\sim \frac{C_{\rho}}{s^{1+\rho}}$ as
$s\rightarrow +\infty$.

If $(H+\zeta)(d-\ep) > 1$, then we further split the last integral in
(\ref{Eq:tildeP2}) over $[ r^{1/(H+\zeta)}  t^{-1/{\rho}}, \,1]$ and
$[1, \infty)$, and to derive
\begin{equation}\label{Eq:324b}
\tilde{P}(t,x,B(x,r))\leq C_2' M\frac{r^{1/(H+ \zeta)}}{t^{1/{\rho}}}
%\bigg(1  + \int^1_{\big(\frac{r}{t^{{H}/{\rho}}}\big)^d}\frac{1}  {s}ds \bigg)
+ C_2 \left(\frac{r}{t^{{(H+\zeta)}/{\rho}}}\right)^{d-\ep}\int^{\infty}_{1}
\frac{p_{\tau_1}(s)} {s^{(H+ \zeta)(d-\ep)}}\,ds,
\end{equation}
where $C_2'$ is a finite constant and, again, the last integral is convergent.
The case of $(H+\zeta)(d-\ep) = 1$ can be treated in the same way, and
\eqref{Eq:324b} still holds with an extra factor of
$\log (\frac{r^{1/(H+ \zeta)}}{t^{1/{\rho}}})$  in the first term on the
right hand side, which can be absorbed by choosing $\tilde{\ep}$ below
slightly bigger. However, for simplicity, we ignore this case
because one may choose $J = \{(\ep,\zeta)\}$ so that $(H+\zeta)(d-\ep) \ne 1$
for all $(\ep,\zeta)\in J$.

Notice that, when $1 = Hd$, we can write $\frac 1 {H+\zeta} = d-\tilde{\ep}$,
where $ \tilde{\ep} = \frac{\zeta}{H(H+\zeta)}$.
It follows from (\ref{Eq:324}) and (\ref{Eq:324b}) that for
 $ r^{1/(H+ \zeta)} \le t^{1/{\rho}}$,
\begin{equation}\label{3.25}
\tilde{P}(t,x,B(x,r))\leq C\left(\frac{r} {t^{(H+\zeta)/\rho}} \right)^{d-\ep'}
\end{equation}
for some finite constant $C$, where $\ep' = \max\{\ep, \tilde{\ep}\}$.
Thus we have verified (\ref{Eq:P6}) with $\ep' = \max\{\ep, \tilde{\ep}\}$
and $\zeta' = \zeta$. Moreover, because $0< t \le A$, the inequality in
(\ref{Eq:P5}) remains valid (with a modified constant $C_3$) if we take
$\ep' = \max\{\ep, \tilde{\ep}\}$. Thus, Condition (A2$'$) has been verified.

Because of Condition (A2$'$)  and the fact that $\frac{Hd}{\rho}>1$,  we
can apply the above uniform lower bound result in the case of $Hd>1$
to Markov process $Y$ to obtain
\begin{equation*}
\mathbb{P}\Big\{\dim Y(E)\ge \frac{\rho}{H}\dim E \ \text{for all Borel sets}
\  E \subseteq [0, \infty)\Big\}=1.
\end{equation*}
It follows  that, with probability 1, for all Borel sets
$E \subseteq [0, \infty)$,
\begin{equation}\label{Eq:Ylb}
\dim X(E) \geq \dim X(\tau(B))=\dim Y(B)\geq \frac{\rho}{H} \dim B,
\end{equation}
where  $B=\{t:\tau_t\in E\} = \tau^{-1}(E)$. Even though both $\dim X(E) $ and $\dim B$
in \eqref{Eq:Ylb} are random, they are determined by two independent processes $X$
and $\tau$, respectively. Hence we have
\begin{equation} \label{Eq:Ylb2}
\mathbb{P}\Big\{\dim X(E)\ge \frac{\rho}{H} \| \dim \tau^{-1} (E) \|_\infty
\ \text{for all Borel sets}
\  E \subseteq [0, \infty)\Big\}=1,
\end{equation}
where $\| \cdot\|_\infty$ is the $L^\infty(\mathbb P)$-norm in the underlying
probability space. (A more illustrative way for deriving \eqref{Eq:Ylb2} is
to use the setting of product probability space. Namely,  we assume that $X$
is defined on $\Omega$, $\tau$ is defined on $\Omega'$, then $Y$ is defined on
$\Omega\times \Omega'$ and \eqref{Eq:Ylb} holds for almost all $(\omega, \omega')$.
One can see that \eqref{Eq:Ylb2} follows from \eqref{Eq:Ylb} and Fubini's Theorem.)

Recalling from Hawkes \cite{Hawkes71} that
\begin{equation}\label{Eq:Hawkes71b}
\| \dim \tau^{-1} (E) \|_\infty  = \frac{\rho + \dim E - 1} {\rho},
\end{equation}
%and recall that $\tau_t$ is $\rho$-stable subordinator implies
%$$
%\dim B=\frac{\rho+\dim E-1}{\rho}
%$$
 we derive that
 $$
\mathbb{P}\bigg\{ \dim X(E) \geq  \frac{\rho+\dim E-1}{H} \
\text{for all Borel sets}
\  E \subseteq [0, \infty)\bigg\}=1.
$$
Letting $\rho\uparrow 1$ yields (\ref{Eq:HdimL}) for the case $1=H d$.
This completes the proof of Theorem \ref{main theorem}.
\end{proof}
%1) $\{Y_t, t \ge 0\}$ is a time-homogeneous strong Markov process.

%2) Under assumptions (A1) and (A2), we can verify that $Y$ satisfies the same
%conditions with $H$ replaced by the new index $H \rho$.

%\begin{rem}\label{Rem 3.5}
%Even though $X_t$ is not $\alpha$-s.s., but the transition function
%$P(t,x ,A)$ is equivalent to some function $\tilde P(t,x, A)$ which
%has self-similar property and satisfies $(A 2)$ in
%\textbf{Assumption \ref{assumption}}, then Theorem \ref{lower} also holds.
%\end{rem}

\section{Examples}\label{sec:examples}
Theorem \ref{main theorem} is applicable to a wide class of Markov processes.
In this section, we provide some examples which include self-similar Markov processes,
L\'evy processes, stable jump diffusion processes and non-symmetric stable-type processes.
%See
%\cite{Ap04, B88, BSW13, C2009, CK, CZ, Kolok00b, Kolok10, Negoro94} for more information
%about these Markov processes.

\subsection{Self-similar Markov processes}

The class of $H$-self-similar ($H$-$s. s.$) Markov processes with values in
$[0, \infty)$ was introduced and studied by Lamperti \cite{L}, who used the
term ``semi-stable'' instead  of ``self-similar''. The $H$-$s. s.$ Markov
processes on $\mathbb R^d$ or $\mathbb R^d \backslash \{0\}$ were investigated
by, in chronicle order, Kiu \cite{Kiu}, Graversen and Vuolle-Apiala
\cite{G-V-A},   Vuolle-Apiala and Graversen \cite{V-A-G}, Vuolle-Apiala \cite{V-A94},
Liu and Xiao \cite{LX98}, Xiao \cite{X98},
Bertoin and Yor \cite{BY05}, Chaumont, et al \cite{CPR}, Alili, et al \cite{ACGZ},
among others.

We recall the definition of $H$-self-similar processes.
Let $(E,\mathcal{B})$ denotes $\mathbb{R}^d$, $\mathbb{R}^d \setminus \{0\}$
or $\mathbb{R}^d_{+}$ with the usual Borel $\sigma$- algebra, $\{e\}$ is a point attached
to $E$ as an isolated point. $\Omega$ denotes
the space of all functions $\omega$ from $[0,\infty)$ to $E\cup\{e\}$ having
the following properties:

\begin{itemize}
\item [(i)]  $\omega(t)=e$ for $ t\geq \tau$, where $\tau=\inf\{t\geq 0: \omega(t)=e\}$;
\item[(ii)]  $\omega$ is right continuous and has a left limit at every $t\in[0,\infty)$.
\end{itemize}

Let $H>0$ be a fixed constant. A time homogeneous Markov process $X=\{X(t),  t \ge 0, \mathbb{P}^x\}$
with state space $E\cup \{e\}$ is called  $H$-self-similar ($H$-s.s.) if its transition probability function
$P(t,x, A)$ satisfies %$P(0, x, A)= 1_{A}(x), \quad \text{for all}\quad x\in E, A\in \mathcal{B},$
%where $1_{A}$ is the indicator function of $A$, and
\begin{eqnarray}\label{self}
P(t, x, A)=P(rt, r^{H}x, r^{H}A),\quad \text{for all }\  t>0, r>0, x\in E, A\in\mathcal{B}.
\end{eqnarray}
%such that $\{X(t), \mathbb{P}^x)$ is a  Markov process
%with a transition function $P(t, x, A)$.
The constant $H$ is called the self-similarity index of $X$.  Condition (\ref{self}) is equivalent to the
statement that for every constant $r>0$ the $\mathbb{P}^x$-distribution of $\{X(t), t \ge 0\}$ is equal to
the $\mathbb{P}^{r^{H}x}$-distribution of $\{r^{-H}X(rt), t \ge 0\}$. %We write this self-similar property as
%$$
%\{X(t), t \ge 0, \mathbb{P}^x\}\stackrel{d}{=}\{r^{-\alpha}X(rt), t \ge 0, \mathbb{P}^{r^{\alpha }x}\}
%\quad \text{ for every } \ r>0.
%$$
%\end{rem}
%The basic examples include such as all $\alpha$-strictly stable L\'evy
%processes on $\R^d$ are $\frac{1}{\alpha}$-s.s. Markov processes and if
%$X_t$ is an $\alpha$-s.s.
%Markov process %on $\R^d$, then for every $r>0$,
%$$
%(X^{<r>}_t, \mathbb{P}^{x^{<1/r>}})
%$$
%is also an $\alpha r$-s.s. Markov process on $\R^d$, where $0^{<r>}=0$
%and $x^{<r>}=x|x|^{r-1}$ for $x\neq 0$. More examples of $\alpha$-s.s.
%processes can be found in \cite{L}.
Important examples of self-similar Markov processes include strictly
$\alpha$-stable L\'evy processes which are $1/\alpha$-s.s., the Bessel processes which form exactly
the class of $1/2$-s.s. diffusions on $(0, \infty)$ (see \cite{RY}).  More examples of $H$-s.s.
Markov processes can be found in \cite{L, G-V-A, Kiu, V-A-G,X98}.

In this section, we take $E = \R^d$ and assume the following two conditions:

\vskip 2mm
(B1)\, There exist positive constants $\beta$, $C$ and
 $a_1$ such that
 \begin{equation} \label{Eq:SS1}
 P(1, x, B(x, a)^c)  \le C a^{-\beta} \quad \hbox{ for all } \, x \in \R^d \hbox{ and }\, a > a_1.
\end{equation}

(B2)\, For any $\varepsilon > 0$ small, there exist positive constants $C_1$ and $C_2$
such that for all $r >0$ and  $x, y \in \mathbb R^d$ with $|x-y| \le r$, we have
%with $|x| \le r$
\begin{equation} \label{Eq:SS2}
C_1 \min\{1, \,  r^{d+\ep}\} \le  P(1, x, B(y, r)) \le  C_2 \min\{1, \,r^{d-\ep}\}.
\end{equation}

The following theorem provides a uniform version for the Hausdorff dimension
result in \cite{LX98}.
\begin{theorem}\label{Th:SSM}
Let $X = \{X(t), t \in \R_+\}$ be an $H$-s.s. Markov process in $\R^d$
that satisfies conditions (B1) and (B2). If $1 \le H d$, then for all
$x \in \R^d$, with $\P^x$-probability one,
$$
\dim X(E) = \frac 1 {H} \, \dim E \ \hbox{ and }\ \dimp X(E) = \frac 1 H \, \dimp E
$$
for all Borel sets $E \subseteq \R_+$.
\end{theorem}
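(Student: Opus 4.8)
The plan is to verify that an $H$-self-similar Markov process satisfying (B1) and (B2) falls under the scope of Theorem \ref{main theorem}, and then simply invoke that theorem. Concretely, I will show that (B1) implies Condition (A1) via Proposition \ref{Prop:C1}, and that (B2) together with self-similarity implies Condition (A2) (and, in the critical case $1 = Hd$, Condition (A3)). The self-similarity relation \eqref{self} is the engine throughout: it lets me transfer estimates stated at the fixed time $t = 1$ to arbitrary times $t > 0$ by the substitution $r = t^{-H}$ in \eqref{self}.

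First I would verify (A1). The natural route is to check that $X$ belongs to the class $\widetilde{\EuScript M}(H)$ and then apply Proposition \ref{Prop:C1}. Using \eqref{self} with the choice $r = h^{-1}$ (or an equivalent scaling), I would rewrite $\a(h, a) = \sup_{x, 0 \le s \le h} P(s, x, B(x,a)^c)$ in terms of the time-$1$ tail probability $P(1, x', B(x', a h^{-H})^c)$, whose decay is controlled by the hypothesis \eqref{Eq:SS1} in (B1). The self-similar scaling converts the small-time, small-radius regime into the large-radius regime $a h^{-1/H} \to \infty$ where (B1) applies, yielding precisely the bound $\a(h,a) \le C (h/a^{1/H})^\beta$ required in \eqref{Def:Mclass}. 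Proposition \ref{Prop:C1} then gives \eqref{Eq:supX} with $\eta = \beta(1 - \gamma/H)$, which is (A1).

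Next I would verify (A2). Here the two-sided bound \eqref{Eq:SS2} in (B2), which holds at time $1$, must be upgraded to the bounds \eqref{Eq:P1} and \eqref{Eq:P2} at arbitrary time $t$. Applying \eqref{self} with $r = t^{-1}$ gives the identity $P(t, y, B(x,r)) = P(1, t^H y, B(t^H x, t^H r))$, so that a ball of radius $r$ at time $t$ becomes a ball of radius $t^H r$ at time $1$; feeding this into \eqref{Eq:SS2} produces bounds of the form $\min\{1, (t^H r)^{d \pm \ep}\} = \min\{1, (r/t^{-H})^{d\pm\ep}\}$. Matching this against \eqref{Eq:P1} and \eqref{Eq:P2} shows that (A2) holds with $\zeta = 0$ and the same $\ep$; since self-similarity makes the scaling exact, the $\zeta$-perturbation in the exponents is not even needed. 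For the critical case $1 = Hd$, the upper bound \eqref{Eq:P2} is obtained from \eqref{Eq:SS2} for all $t > 0$ (not merely $t \le T$), since the identity from \eqref{self} is valid for every $t > 0$; this is exactly what Condition (A3) requires. Finally, invoking Theorem \ref{main theorem} in case (i) when $1 < Hd$ and in case (ii) when $1 = Hd$ yields both the Hausdorff and packing dimension statements.

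I expect the main obstacle to be bookkeeping rather than any deep difficulty: the translation between the fixed-time estimates of (B1)--(B2) and the time-dependent Conditions (A1)--(A3) must be carried out carefully so that the scaling exponents line up correctly, and one must confirm that the regime where \eqref{Eq:SS2} is asserted to hold (all $r > 0$, all $x,y$ with $|x-y| \le r$) is genuinely compatible, after rescaling, with the regime $0 < r \le r_0$, $0 < t \le T$ in (A2). The one genuinely delicate point is the critical case $1 = Hd$, where I must ensure that the strengthened upper bound \eqref{Eq:P2} holds for \emph{all} $t > 0$ so that (A3) applies and the subordination argument inside Theorem \ref{main theorem} goes through; this is where self-similarity is indispensable, since it is precisely what guarantees the time-$1$ estimate propagates to large $t$ with no loss.
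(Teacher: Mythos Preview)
Your plan is exactly the paper's: use self-similarity with (B1) to place $X$ in $\widetilde{\EuScript M}(H)$ and invoke Proposition~\ref{Prop:C1} for (A1), then use self-similarity with (B2) to verify (A2)/(A3) with $\zeta=0$, and conclude by Theorem~\ref{main theorem}. One computational slip to fix when you write it out: applying \eqref{self} with scaling parameter $t^{-1}$ gives $P(t,y,B(x,r)) = P(1,\,t^{-H}y,\,B(t^{-H}x,\,t^{-H}r))$, not $t^{H}$; with the correct sign the rescaled radius is $rt^{-H}$ and (B2) yields $\min\{1,(r/t^{H})^{d\pm\ep}\}$, which is precisely \eqref{Eq:P1}--\eqref{Eq:P2} with $\zeta=0$ (and, since the identity holds for all $t>0$, (A3) as well).
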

\begin{proof}  It is sufficient to verify that Conditions (A1) and (A3)
are satisfied. It follows from \eqref{self} and  \eqref{Eq:SS1} that for
any $t > 0$, $x \in \R^d$ and $a > 0$,
\begin{equation}\label{Eq:B1}
P\big(t, x, B(x, a)^c\big) = P\big(1, xt^{-H}, B(xt^{-H}, a t^{-H})^c\big)
\le C (a t^{-H})^{-\beta}
\end{equation}
provided $a t^{-H} \ge a_1$. This implies that $\a(h, a)
\le C(h a^{-1/H})^{H\beta}$ for all $h, a >0$
satisfying $h a^{-1/H} \le a_1^{-1/H}$.
Hence $X$ belongs to the class $\widetilde{\EuScript M} (H)$.
It follows from Proposition \ref{Prop:C1} that Condition (A1) is satisfied.
For verifying (A3), we apply \eqref{self} again, together with the first
inequality in \eqref{Eq:SS2}, to see that for all $t, r > 0$ and
 $x, y \in \R^d$  with $|x-y|\le r$,
\begin{equation}\label{Eq:B2}
P\big(t, x, B(y, r)\big) = P\big(1, xt^{-H}, B(yt^{-H}, r t^{-H})\big)
\ge C_1\min\bigg\{1, \Big(\frac r {t^{H}}\Big)^{d+\ep}\bigg\}.
\end{equation}
Similarly, we have
\begin{equation*}\label{Eq:B3}
P\big(t, x, B(x, r)\big) = P\big(1, xt^{-H}, B(xt^{-H}, r t^{-H})\big)
\le C_2\min\bigg\{1, \Big(\frac r {t^{H}}\Big)^{d-\ep}\bigg\}.
\end{equation*}
Thus, Condition (A3) is satisfied with $J = \{(\ep_n,  0)\}$,
where $\ep_n \downarrow 0$ can be taken arbitrarily. Therefore
the conclusion of Theorem \ref{Th:SSM} follows from Theorem
\ref{main theorem}.
\end{proof}

\subsection{L\'evy processes}

A stochastic process $X =\{X(t),\ t \geq 0\}$ on a probability
space $(\Omega, {\mathcal F}, \PP)$, with values in $\R^d $, is
called a \emph{L\'evy process}, if $X$ has stationary and
independent increments,  $t\mapsto X(t)$ is continuous in
probability and $\PP\{X(0) = 0\} = 1$. It is well known that for
$t\ge 0$, the characteristic function of $X(t)$ is given by
$$
    \E \big[ e^{ i \l \xi, X(t)\r } \big] =
    e^{-t\psi(\xi)},
$$
where, by the L\'evy-Khintchine formula,
\begin{equation}\label{KL}
    \psi(\xi)= i \l \mathsf{a}, \xi\r +\frac{1}{2}
    \l \xi ,\mathsf{\Sigma}
    \xi^{'} \r +\int_{\R^d}\Bigl[1-e^{i \l x, \xi \r}+ \frac{i \l x, \xi
    \r}{1+|x|^2}\Bigr]\ \mathsf{L} (dx),\qquad \forall \xi \in \R^d,
\end{equation}
and $\mathsf{a} \in\R^d$ is fixed, $\mathsf{\Sigma}$ is a
non-negative definite, symmetric, $(d\times d)$ matrix, and
$\mathsf{L}$ is a Borel measure on $\R^d\setminus\{0\}$ that
satisfies
\[
    \int_{\R^d} \frac{|x|^2}{1+|x|^2}\, \mathsf{L}(dx) < \infty .
\]
The function $\psi$ is called the \emph{characteristic or L\'evy exponent}
of $X$, and $\mathsf{L}$ is  the corresponding \emph{L\'evy measure}.
The characteristic exponent $\psi$ plays very important roles in studying
the L\'evy process $X$ and many sample path properties of $X$ can be
described in terms of $\psi$. We also note that
\[
    \Re\, \psi(\xi) \ge 0, \textnormal{ and }\ \
    \Re\,\psi(- \xi) = \Re\, \psi(\xi),
    \qquad \forall\, \xi \in \R^d.
\]
Notice that, if $X$ is symmetric (i.e., $X$ and $-X$ have the same law),
then its L\'evy exponent $\psi(\xi)$ is a nonnegative function.

%In the following, we will omit the Gaussian part by assuming
%$\mathsf{\Sigma}=0$, because otherwise we can prove $\dimp X(E)$
%is the same as the packing dimension of the image under Brownian
%motion.

%A L\'evy process, $X$, is  \emph{symmetric} if $-X$ and $X$ have
%the same finite dimensional distributions. It is clear that $X$ is
%symmetric if and only if $\psi(\xi) \ge 0$, for all $\xi\in\R^d$.

A L\'evy process $X$ in $\R^d$ is called a \emph{stable} L\'evy
process with index $\alpha \in (0, 2]$ if its L\'evy measure
$\mathsf{L}$  is of the form
\begin{equation}\label{L-measure}
\mathsf{L}(dx) =  \frac{dr} {r^{1 + \a}} \, \nu (dy), \quad
\forall\, x = r y, \ \ (r, y) \in \R_+\times \mathbb{S}_d,
\end{equation}
where $\mathbb{S}_d = \{y \in \R^d:\, |y |=1\}$ is the unit sphere
in $\R^d$ and $\nu(dy)$ is an arbitrary finite Borel measure on
$\mathbb{S}_d$. Stable L\'evy processes in $\R^d$ of index $\a = 1$
are also called \emph{Cauchy processes}. It follows from (\ref{KL})
and (\ref{L-measure}) that the L\'evy exponent $\psi_\a$ of a stable
L\'evy process of index $\a \in (0, 2]$ can be written as
\[
\begin{split}
\psi_\a (\xi) &= \int_{\mathbb{S}_d} |\l \xi, y\r |^\a \Big[ 1 -
i~ {\rm sgn} (\l \xi, y\r) \tan \big( \frac{\pi\a}{2} \big) \Big]
\mathsf{M} (dy) + i \l \xi, A_0\r \quad \hbox{if }\ \a \ne 1,\\
\psi_1 (\xi)  &= \int_{\mathbb{S}_d} |\l \xi, y\r| \Big[ 1 + i~
\frac{\pi} 2 {\rm sgn} (\l \xi, y\r) \log   |\l \xi, y\r|  \Big]
\mathsf{M} (dy) + i \l \xi, A_0 \r ,
\end{split}
\]
where the pair $(\mathsf{M}, A_0)$ is unique, and the measure $\mathsf{M}$,
which depends on $\nu$ in (\ref{L-measure}), is called the \emph{spectral measure}
of $X$. See Samorodnitsky and Taqqu \cite[pp.65--66]{ST94}.

%Before we state and prove Theorem \ref{Th:Levy} below, we remark that,
\begin{remark}\label{Re:Lv}
Due to the space-homogeneity of L\'evy processes, Theorem \ref{main theorem} and
its proof can be simplified.
Firstly,  we assume  the following simpler conditions:
%\begin{itemize}
%\item[

\vskip 2mm
(A$2^{''}$)\, For any $\zeta > 0$ and $T > 0$,
there exist positive constants $C_1$, $C_2$, $r_0 \leq 1$
such that for all $ 0< t \le T$ and  $0 < r \le r_0$,
\begin{equation}\label{Eq:LP1a}
{\mathbb P}( |X(t)| \le r ) \ge C_1 \min\bigg\{1,\, \Big(\frac{r}
{t^{H-\zeta}}\Big)^{d}\bigg\};
\end{equation}
and
\begin{equation}\label{Eq:LP1b}
  {\mathbb P}( |X(t)| \le r ) \le
C_2 \min\bigg\{1,\, \Big(\frac{r}
{t^{H+\zeta}}\Big)^{d}\bigg\}.
\end{equation}

\vskip 2mm
(A$3^{''}$)\, We strengthen (A$2^{''}$) by assuming additionally
that \eqref{Eq:LP1b} holds for all $t > 0$.
%\item [(A$3^{\prime}$)] There exist positive constants $C_3, C_4,
%\eta_1 \le1$ such that for all $0<t\leq 1$,
%\begin{equation*}\label{Eq:P3b}
%C_3 \min\bigg\{1,\, \Big(\frac{r} {t^{H}}\Big)^{d}\bigg\}
%\le {\mathbb P}( |X(t)| \le r ) \le
%C_4 \min\bigg\{1,\, \Big(\frac{r}{t^{H}}\Big)^{d}\bigg\}.
% \end{equation*}
%\end{itemize}

\vskip 2mm
It is clear that the inequalities in \eqref{Eq:LP1a} and \eqref{Eq:LP1b}
imply Condition (A2) with $J = \{(0, \zeta_n)\}$, where $\zeta_n$ is an
arbitrary sequence with $\zeta_n \downarrow 0$ as $n \to \infty$.
Secondly, instead of \eqref{delay hitting}, we use directly Theorem 1.1
in \cite{K97} to get
\begin{equation} \label{delay hitting2}
%\begin{split}
\mathbb{P}^{x}\Big\{\inf_{t \le s \le T} |X(s)-x|\leq r\Big\}
\leq \frac{\int^{2T}_{t} \mathbb P(|X(s)| \le 2r )ds}{
\int^{T-t}_{0}\mathbb P(|X(s)| \le r) ds}.
%\end{split}
\end{equation}
Note that, due to the space-homogeneity of $X$,  the denominator in
the right hand side of \eqref{delay hitting2} is simpler that that in
\eqref{delay hitting}. One can check that a slightly modified version
of (\ref{delay hitting probability}) holds under condition (A$2^{\prime\prime}$).
Hence, for a space-homogeneous Markov process that satisfies (A1),
the conclusion of Theorem \ref{main theorem} still holds under either
$1 < Hd$ and (A$2^{\prime\prime}$); or $1 = Hd$ and (A$3^{\prime\prime}$).
We will use this modified version of Theorem \ref{main theorem} to L\'evy
processes.
\end{remark}

As we mentioned earlier, for a L\'evy process $X = \{X(t), t \in \R_+\}$,
many of its sample path properties are characterized by the analytic or
asymptotic properties of its characteristic exponent $\psi(\xi)$. In order
to determine the Hausdorff and packing dimension of $X(E)$, we will make
use of the following conditions:

\vskip 2mm
(B3)\, There is a constant $\a \in (0, 2]$ such that the following hold:
\begin{itemize}
\item[(i)] If $0 < \alpha < 2$, then for every $\zeta' \in (0, 2-\alpha)$
we have
\begin{equation}\label{Eq:growth}
K^{-1}_{5} \, |\xi|^{\a - \zeta'} \le \psi(\xi)
\le  K_{5} \, |\xi|^{\alpha + \zeta'},\quad\forall \xi \in \R^d \hbox{
with }\ | \xi|\ge \tau,
\end{equation}
where $K_5 \ge 1 $ and $\tau$ are positive and finite constants.
\item[(ii)]\, If $\alpha = 2$, then for any $\zeta' \in (0, 2)$,
\begin{equation}\label{Eq:growth2}
K^{-1}_{5} \, |\xi|^{2 - \zeta'} \le %\Re \psi(\xi) \le
\psi(\xi)
\le  K_{5} \, |\xi|^{2},\quad\forall \xi \in \R^d \hbox{
with }\ | \xi|\ge \tau.
\end{equation}
\end{itemize}

\vskip 2mm
(B4)\, In addition to (B3), we assume that the left inequalities
in \eqref{Eq:growth} and \eqref{Eq:growth2} hold for all $\xi \in \R^d$.

\vskip 2mm
\begin{remark} The following are some remarks about Conditions (B3) and (B4).
\begin{itemize}
\item[(i)]\, Since $\psi(\xi)$ is a negative definite function, the
right inequality in \eqref{Eq:growth2} always holds for $|\xi|\ge 1$. (cf. \cite[p.46]{BF75}).

\item[(ii)]\, Conditions \eqref{Eq:growth} and \eqref{Eq:growth2} are satisfied
by a large class of symmetric L\'evy  processes whose L\'evy measures have certain
(approximate) regularly varying properties at the origin. This can be explicitly
formulated by modifying Condition (2.17) (use $|\lambda| \to 0$ instead of
$|\lambda| \to \infty$) and the proof of Theorem 2.5 in Xiao \cite{X07}.
%\item[(iii)] For Theorem \ref{Th:Levy}, I believe that condition (B4) is not needed.
%But this would need a new proof which goes beyond the scope of this paper.
\end{itemize}
\end{remark}

\vskip 2mm
Now we are ready to state and prove the following theorem, which extends
the uniform Hausdorff and packing dimension results of Hawkes \cite{HJ},
Hawkes and Pruitt \cite{HP}, Perkins and Taylor \cite{PerkinsTaylor}
for stable L\'evy processes to a class of symmetric L\'evy processes.

\begin{theorem}\label{Th:Levy}
Let $X = \{X(t), t \in \R_+\}$ be a symmetric L\'evy process in $\R^d$ with
exponent $\psi(\xi)$. We assume either (i) $1 < \a d$ and (B3) hold; or
(ii) $1 = \a d$ and (B4) hold. Then with probability one,
$$\dim X(E) = \a \, \dim E \ \hbox{ and }\ \dimp X(E) = \a \, \dimp E$$
for all Borel sets $E \subseteq \R_+$.
\end{theorem}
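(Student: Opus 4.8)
The plan is to derive Theorem~\ref{Th:Levy} as a direct application of Theorem~\ref{main theorem} (in its space-homogeneous form from Remark~\ref{Re:Lv}) with the self-similarity index $H$ taken to be $1/\alpha$. The main task is therefore to translate the analytic conditions (B3) and (B4) on the characteristic exponent $\psi$ into the probabilistic conditions (A1) and either (A$2''$) or (A$3''$). First I would verify (A1): by Proposition~\ref{Prop:C1-1} (valid for L\'evy processes, where $q(x,\xi)=\psi(\xi)$ is $x$-independent), the maximal inequality gives
\begin{equation*}
\PP\Big\{\sup_{0\le s\le t}|X(s)|\ge r\Big\}\le C\, t\sup_{|\xi|\le 1/r}|\psi(\xi)|.
\end{equation*}
Setting $r=t^\gamma$ for $\gamma\in(0,1/\alpha)$ and using the upper bound in \eqref{Eq:growth}/\eqref{Eq:growth2}, namely $|\psi(\xi)|\le K_5|\xi|^{\alpha+\zeta'}$ for $|\xi|\ge\tau$, yields $\sup_{|\xi|\le t^{-\gamma}}|\psi(\xi)|\le C\,t^{-\gamma(\alpha+\zeta')}$ for small $t$, so the right-hand side is bounded by $C\,t^{1-\gamma(\alpha+\zeta')}$ with a strictly positive exponent once $\zeta'$ is chosen small. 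This establishes (A1) with $H=1/\alpha$.

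Next I would verify the one-point transition estimates (A$2''$)/(A$3''$), which is where the bulk of the work lies. The key tool is the standard Fourier inversion bound relating the return probability $\PP(|X(t)|\le r)$ to the characteristic exponent. For the upper bound \eqref{Eq:LP1b} one uses an estimate of the form $\PP(|X(t)|\le r)\le C\,r^d\int_{|\xi|\le 1/r}e^{-t\RE\psi(\xi)}\,d\xi$ (valid since $\psi$ is nonnegative for symmetric $X$), and then invokes the lower bound $\psi(\xi)\ge K_5^{-1}|\xi|^{\alpha-\zeta'}$ to control the exponential; a change of variables $\xi=s/t^{1/(\alpha-\zeta')}$ produces the scaling $(r/t^{(1/\alpha)-\zeta})^d$ after identifying $\zeta$ in terms of $\zeta'$. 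For the lower bound \eqref{Eq:LP1a} one argues dually, using a nonnegative test function supported near the origin together with the upper bound on $\psi$ to bound $e^{-t\psi(\xi)}$ from below on a ball of radius $\sim t^{-1/(\alpha+\zeta')}$. The distinction between (B3) and (B4) is exactly that (B4) extends the lower bound on $\psi$ to \emph{all} $\xi$, which is what makes the upper-bound integral converge globally and hence makes \eqref{Eq:LP1b} hold for all $t>0$ rather than only $t\le T$; this is precisely what (A$3''$) demands in the critical case $1=\alpha d$.

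Once (A1) and (A$2''$) (resp.\ (A$3''$)) are in hand, the conclusion follows immediately from the space-homogeneous version of Theorem~\ref{main theorem} described in Remark~\ref{Re:Lv}: in case (i) $1<\alpha d=Hd$ with (A$2''$), and in case (ii) $1=\alpha d=Hd$ with (A$3''$), we obtain $\dim X(E)=(1/H)\dim E=\alpha\dim E$ and likewise for packing dimension, uniformly over all Borel $E\subseteq\R_+$. I expect the main obstacle to be the careful bookkeeping of the two small parameters: matching the spectral exponent $\zeta'$ from (B3)/(B4) against the index perturbation $\zeta$ appearing in (A$2''$), and verifying that the arbitrarily small constants can be chosen consistently so that both the upper and lower return-probability estimates hold simultaneously with compatible powers. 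The Fourier-inversion estimates themselves are classical (cf.\ the treatment of return probabilities via $\psi$ in \cite{X2004,X07}), so the real care is in the scaling computation and in confirming that the global lower bound on $\psi$ in (B4) is genuinely what upgrades (A$2''$) to (A$3''$) in the critical regime.
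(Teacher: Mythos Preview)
Your proposal is correct and follows essentially the same route as the paper: verify (A1) via Proposition~\ref{Prop:C1-1}, establish (A$2''$)/(A$3''$) by Fourier comparison of $\PP(|X(t)|\le r)$ with an isotropic $(\alpha\pm\zeta')$-stable law, and then invoke Theorem~\ref{main theorem} through Remark~\ref{Re:Lv}. The only cosmetic difference is that the paper carries out the Fourier step with the explicit Fej\'er-type kernel $\varphi_r(y)=\prod_{j}(1-\cos(2ry_j))/(2\pi r y_j^2)$ and Parseval's formula rather than the generic bound $\PP(|X(t)|\le r)\le Cr^d\int_{|\xi|\le 1/r}e^{-t\psi(\xi)}\,d\xi$ that you sketch.
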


\begin{proof}  This theorem is a consequence of Theorem \ref{main theorem} and
Remark \ref{Re:Lv}. It is sufficient to verify that
\begin{itemize}
\item[(a)] When $1 < \a d$,  (B3) implies Conditions (A1) and (A$2^{\prime\prime}$)
with $H = \frac 1 \alpha$.
\item[(b)] When $1 = \a d$,  (B4) implies Conditions (A1) and (A$3^{\prime\prime}$)
hold with $H = \frac 1 \alpha$.
\end{itemize}
It will be clear that (a) and (b) can be verified by the same method. For simplicity,
we only show (a) in case (i) where $0 < \a < 2$ and leave the rest of the verification
to an interested reader.

In order to verify Conditions (A1), we apply Proposition \ref{Prop:C1-1}.
For any fixed $\gamma\in(0,1/\alpha)$, there exists $\epsilon_0\in (0,2-\alpha)$
such that $\gamma(\alpha+\epsilon_0)<1.$
%Let $\delta_0=\alpha+\epsilon_0$. %when $\alpha \ne d$, and $\delta_0=\alpha$ when $\alpha =d$.
By (\ref{eq Maximal estimate}), we know that for $t\in(0,1]$ small enough,
\begin{eqnarray*}
\sup_{x\in\mathbb{R}^d}\mathbb{P}^x\bigg\{\sup_{s\in[0,t]}|X(s)-x|\geq t^\gamma\bigg\}
&\leq&    Ct\sup_{|\xi|\leq t^{-\gamma}}|\psi(\xi)|\\
&\leq& Ct \bigg(\sup_{|\xi|\leq \tau}|\psi(\xi)|+K_5t^{-\gamma(\alpha+\epsilon_0)}\bigg)\\
    &\leq&C  t^{1-\gamma(\alpha+\epsilon_0)}.
\end{eqnarray*}
Since $1-\gamma(\alpha+\epsilon_0)>0$, we see that Conditions (A1) holds with
$\eta = 1 - \gamma (\alpha+\epsilon_0)$.
\vskip 0.2cm

In order to verify Condition (A$2^{\prime\prime}$) under \eqref{Eq:growth},
we use an argument from Khoshnevisan and Xiao \cite{KX03, KX08}.
For any $r>0$, consider the nonnegative function
\[
\varphi_r(y) = \prod_{j=1}^d \frac{1 - \cos(2ry_j)} {2 \pi r
y_j^2},\qquad\forall y\in\R^d.
\]
Its Fourier transform is given  by
\begin{equation}\label{eq:FT}
\widehat{\varphi_r}(\xi) = \prod_{j=1}^d \Big( 1 - \frac{
        |\xi_j|}{2r} \Big)^+,\qquad\forall \xi\in\R^d,
\end{equation}
where $a^+=\max(a,0)$.

Note that, for $\xi\in B(0,r)$, we have $1-(2r)^{-1}|\xi_j| \ge \frac 12$.
In light of (\ref{eq:FT}), this implies $\I_{B(0,r)}(\xi) \le 2^d
\widehat{\varphi_r}(\xi)$, where $\I_A$ denotes the indicator function
of the set $A$. On the other hand, if $ |\xi_j| \ge 2r$
for some $j \in \{1, \ldots, d\}$, by (\ref{eq:FT}),  then
$\widehat{\varphi_r}(\xi) =0 $. Hence we have shown that for all
$\forall\xi\in\R^d,$
\begin{equation}\label{Eq:I0}
\I_{B(0,r)}(\xi) \le 2^d \widehat{\varphi_r}(\xi) \le 2^d\,
\I_{B(0, 2 \sqrt{d}\, r)}(\xi).
\end{equation}
Integrating the first inequality in (\ref{Eq:I0}) with respect to
$\nu_t$, the distribution of $X(t)$, and using Parseval's formula
yield
\begin{equation}\label{Eq:I1}
\begin{split}
\PP\big\{|X(t)| \le r\big\} &\le 2^d \int_{\R^d}
\widehat{\varphi_r} (\xi)\,\nu_t(d \xi)\\
& = 2^d \int_{\R^d} \varphi_r(\xi)\, \widehat{\nu_t}(\xi) \,
d\xi\\
&= 2^d \, \int_{\R^d} e^{-t  \psi(\xi)}\,\prod_{j=1}^d \frac{1
- \cos(2r\, \xi_j)} {2 \pi r \xi_j^2}\, d\xi.
%&:= 2^d(I_1 + I_2),
\end{split}
\end{equation}
%where $I_1$ and $I_2$ are respectively the integral over $B(0, \tau)
%=\{\xi\in \R^d: |\xi| < \tau\}$ and its complement.
%For any $0 < r\le r_0$ and $ t > 0$, in order to verify (\ref{Eq:LP1b})
%it is sufficient to consider the case when $r < t^{1/(\alpha - \zeta')}$
%(otherwise we bound the probability by 1).
We split the last integral in \eqref{Eq:I1} over $B(0, \tau)
=\{\xi\in \R^d: |\xi| < \tau\}$ and its complement, respectively.
For the first integral, we use the elementary inequality $1 - \cos x \le x^2$
($\forall x \in \R$), to derive
\begin{equation}\label{Eq:I2}
 \int_{|\xi| \le \tau} e^{-t\, \psi(\xi)}\,\prod_{j=1}^d \frac{1 -
\cos(2r\, \xi_j)} {2 \pi r \xi_j^2}\, d\xi \le K \, r^d.
\end{equation}
For the second integral, we use (\ref{Eq:growth}), Parseval's
formula and (\ref{Eq:I0}) to derive
\begin{equation}\label{Eq:I3}
\begin{split}
 \int_{|\xi| > \tau} e^{- t \, \psi(\xi)}\,\prod_{j=1}^d \frac{1 -
\cos(2r\, \xi_j)} {2 \pi r \xi_j^2}\, d\xi  &\le \int_{\R^d}
e^{-K^{-1}_{5}\, t\,  \|\xi\|^{\a - \zeta'}}\,\prod_{j=1}^d
\frac{1 - \cos(2r\, \xi_j)} {2 \pi r \xi_j^2}\, d\xi\\
&= \int_{\R^d} \widehat{\varphi_r} (\xi)\,\mu_t(d \xi)\\
&\le \, \int_{\R^d}  \I_{B(0, 2 \sqrt{d}\, r)}(\xi)\,\mu_t(d \xi)\\
&\le C\, \min\bigg\{1,\, \Big(\frac r {t^{1/(\a
-\zeta')}}\Big)^d\bigg\}.
\end{split}
\end{equation}
In the above, $\mu_t$ denotes the distribution of the isotropic
stable law with characteristic function $\widehat{\mu_t}(\xi) =
e^{-K^{-1}_{5}\, t\,  |\xi|^{\a - \zeta'}}$ and the last inequality
follows from the boundedness and scaling property of the density
function of $\mu_t$. Combining (\ref{Eq:I1})--(\ref{Eq:I3}) we derive
that, for all $0 < t \le T$,  \eqref{Eq:LP1b} holds with $H = 1/\a$ and
$\zeta = \frac{\zeta'}{\alpha (\alpha - \zeta')}$.

Next we verify the lower bound in \eqref{Eq:LP1a} .
Let $\tilde{r} = r/(2\sqrt{d})$.  It follows from \eqref{Eq:I0} that
\begin{equation}\label{Eq:La}
\begin{split}
\PP\big\{|X(t)| \le r\big\} &\ge \int_{\R^d} \widehat{\varphi}_{\tilde{r}}
 (\xi ) \,\nu_t(d \xi) \\
&= \int_{\R^d} \varphi_{\tilde{r}}(\xi)\, \widehat{\nu_t}(\xi) \,
d\xi \\
&=  \int_{\R^d} e^{-t  \psi(\xi)}\,\prod_{j=1}^d \frac{1
- \cos(2{\tilde r}\, \xi_j)} {2 \pi \tilde{r} \xi_j^2}\, d\xi.
\end{split}
\end{equation}
Again, we split the last integral over $B(0, \tau)=\{\xi\in \R^d: |\xi|
< \tau\}$ and its complement, and use a similar argument as in (\ref{Eq:I3})
to obtain
\begin{equation}\label{Eq:Lb}
\begin{split}
\PP\big\{|X(t)| \le r\big\} &\ge \int_{|\xi| \ge \tau }
e^{-K_5 t |\xi|^{\alpha + \zeta'}}\,\prod_{j=1}^d \frac{1
- \cos(2{\tilde r}\, \xi_j)} {2 \pi \tilde{r} \xi_j^2}\, d\xi\\
&= \int_{\R^d } e^{-K_5 t |\xi|^{\alpha + \zeta'}}\,\prod_{j=1}^d \frac{1
- \cos(2{\tilde r}\, \xi_j)} {2 \pi \tilde{r} \xi_j^2}\, d\xi \\
&\qquad - \int_{|\xi|\le \tau } e^{-K_5 t |\xi|^{\alpha + \zeta'}}\,\prod_{j=1}^d \frac{1
- \cos(2{\tilde r}\, \xi_j)} {2 \pi \tilde{r} \xi_j^2}\, d\xi\\
&\ge C\, \min\bigg\{1,\, \Big(\frac r {t^{1/(\a
+\zeta')}}\Big)^d\bigg\}.
\end{split}
\end{equation}
Note that $\frac 1 {\a + \zeta'} = \frac 1 \a - \frac{\zeta'}
{\alpha (\alpha + \zeta')} >  \frac 1 \a - \zeta$.
It follows from (\ref{Eq:Lb}) that we can choose a constant $C_1$
such that for all $0 < t \le T$, the lower bound in \eqref{Eq:LP1a}
holds with $H = 1/\a $ and $\zeta =
\frac{\zeta'}{\alpha (\alpha - \zeta')}$. Hence, we have shown that
\eqref{Eq:growth} implies Condition (A$2^{\prime\prime}$) with
$H = 1/\a $ and $\zeta = \frac{\zeta'}{\alpha (\alpha - \zeta')}$.
%Similarly,\eqref{Eq:growth} with $\ep = 0$ implies Condition (A$3^{\prime}$).
This completes the proof of Theorem \ref{Th:Levy}.
\end{proof}

%\item[(ii)]\,  From the above proof, we see that the isotropy assumption on $X$ is only used for verifying
%(\ref{Eq:P1}) in Condition (A2). Inequalities (\ref{Eq:Ia}) and  (\ref{Eq:I1})--(\ref{Eq:I3})
%still hold if $\psi$ satisfies $K^{-1}_{5} \, |\xi|^{\a - \ep} \le \Re \psi(\xi) \le
%\psi(\xi) \le  K_{5} \, |\xi|^{\alpha + \varepsilon} $ for all $ \xi \in \R^d $  with $ | \xi|\ge \tau.$
\begin{remark}
Our method for verifying of Condition (A$2^{\prime\prime}$) provides a
comparison theorem for the transition probabilities
of L\'evy processes in terms of their L\'evy exponents. This may be of independent interest.
Several authors have established estimates on the transition density functions
of L\'evy processes based on information on their L\'evy measures or L\'evy exponents;
see Kaleta and Sztonyk \cite{KS15} and the references therein for further information.
\end{remark}

\subsection{Stable jump-diffusions}
Let $X = \{X(t), t \in \R_+, \PP^x, x \in \R^d\}$ be a Feller
process with values in $\R^d$ corresponding to the Feller
semigroup defined by the following equation:
\begin{equation}\label{Eq:Equ}
\frac{\partial u}{\partial t} = \langle A(x), \frac{\partial
u}{\partial x} \rangle + \int_0^\infty \int_{\mathbb{S}_d} \Big(
u(x + |\xi|s) -u(x) -\frac{\langle |\xi|s, \frac{\partial u}{\partial
x}(x) \rangle} {1 + |\xi |^2}\Big)\, \frac{d |\xi|}{|\xi|^{1
+\a}}\, \widetilde{\mathsf{M}}(x, ds),
\end{equation}
where the drift $A$ and the spectral measure
$\widetilde{\mathsf{M}}$ on $\mathbb{S}_d$ depend smoothly on $x$,
see Theorem 3.1 of Kolokoltsov \cite{Kolok00b} for the precise conditions
on $A$ and $\widetilde{\mathsf{M}}$.

Following Kolokoltsov \cite{Kolok00b}, we call $X$ a stable jump-diffusion.
Roughly speaking, these are the processes corresponding to stable
L\'evy processes in the same way as the ordinary diffusions
corresponding to Brownian motion.

Locally, the stable jump-diffusion $X$ resembles a stable L\'evy
process, hence it is expected that a stable jump-diffusion has
sample path properties similar to those of a stable L\'evy
process. Some of these properties such as the limsup behavior of
$X(t)$ as $t \to 0$ have been established by Kolokoltsov \cite[section 6]{Kolok00b}.
Moreover, for every fixed Borel set $E \subseteq \R_+$, the Hausdorff
dimension of the image set $X(E)$ can be
derived from Theorem 4.14 in Xiao \cite{X2004}.

The following theorem proves a uniform Hausdorff and packing dimension result for
stable jump diffusions.

\begin{theorem}\label{Th:Kolo}
Let $X=\{X(t), t \in \R_+, \PP^x\}$ be a stable jump-diffusion in
$\R^d$ with index $\a \in (0, 2]$ as described above. If $\alpha \le d$, then for every
$x \in \R^d$, $\PP^x$-almost surely
\begin{equation}\label{Eq:Kolo}
\dim X(E) = \a \,\dim E \ \hbox{ and }\ \dimp X(E) = \a \,\dimp E
\end{equation}
hold for all Borel sets $E \subseteq \R_+$.
\end{theorem}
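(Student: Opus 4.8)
The plan is to deduce Theorem~\ref{Th:Kolo} from the main result, Theorem~\ref{main theorem}, exactly as was done for self-similar processes and L\'evy processes. Since a stable jump-diffusion behaves locally like a stable L\'evy process of index $\a$, the natural guess is that Theorem~\ref{main theorem} applies with $H = 1/\a$, so that $Hd = d/\a \ge 1$ precisely when $\a \le d$; the two cases $\a < d$ and $\a = d$ then correspond to cases (i) and (ii) of Theorem~\ref{main theorem}. Thus it suffices to verify Conditions (A1) and (A2) (respectively (A3) in the critical case $\a = d$) for the process $X$ with $H = 1/\a$.

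For Condition (A1), I would invoke the transition density estimates established by Kolokoltsov \cite{Kolok00b} for stable jump-diffusions. These estimates provide a two-sided comparison of the transition density $p(t,x,y)$ with that of a genuine stable process of index $\a$, uniformly for small $t$, which should yield a tail bound of the type $P\big(s,x,B(x,a)^c\big) \le C (s a^{-\a})$ for small $s$. Translating this into the language of the function $\a(h,a)$ from \eqref{Eq:alphaf}, one obtains membership in the class $\widetilde{\EuScript M}(H)$ with $H = 1/\a$ and $\beta = 1$ (since $a^{-1/H} = a^{-\a}$). Proposition~\ref{Prop:C1} then immediately delivers (A1) with $\eta = 1 - \gamma\a > 0$ for any $\gamma \in (0, 1/\a)$. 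Alternatively, since $X$ is a L\'evy-type (Feller) process, Proposition~\ref{Prop:C1-1} could be applied directly using the symbol of the generator in \eqref{Eq:Equ}.

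For Conditions (A2) and (A3), the key is again the heat kernel estimate of Kolokoltsov: the transition density satisfies two-sided bounds of the form $p(t,x,y) \asymp t^{-d/\a}\, g\big((x-y)/t^{1/\a}\big)$ for a profile $g$ comparable to that of the stable density, valid for $t$ in a bounded range. Integrating these bounds over the ball $B(x,r)$ yields exactly the upper and lower estimates \eqref{Eq:P1} and \eqref{Eq:P2} with $H = 1/\a$; because the stable profile gives genuine power-law behaviour, one can in fact take the sequence $J = \{(\ep_n, 0)\}$ with $\ep_n \downarrow 0$, mirroring the self-similar case in Theorem~\ref{Th:SSM}. In the critical case $\a = d$, one needs \eqref{Eq:P2} to persist for all $t > 0$; this requires the global decay of the transition probability $P(t,x,B(x,r)) \le C (r/t^{1/\a})^{d}$ for large $t$ as well, which follows from the large-time behaviour of the density.

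The main obstacle I expect is the critical case $\a = d$ (equivalently $Hd = 1$), where Condition (A3) demands the upper estimate \eqref{Eq:P2} for \emph{all} $t > 0$, not merely for small or bounded $t$. The transition density estimates in \cite{Kolok00b} are typically local (small-time), so one must either establish a suitable global-in-time upper bound for $P(t,x,B(x,r))$ or argue that the subordination step in the proof of Theorem~\ref{main theorem} can be carried out using only the available estimates. Since $\a = d$ forces $d = 1$ (as $\a \le 2$), this case reduces to a one-dimensional stable jump-diffusion of index $1$, which may allow a more direct verification. Once (A1) and (A2)/(A3) are in place, the conclusion \eqref{Eq:Kolo} follows verbatim from Theorem~\ref{main theorem}.
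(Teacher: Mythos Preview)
Your approach is essentially the same as the paper's: verify (A1) and (A2)/(A3) with $H=1/\a$ using Kolokoltsov's estimates, then invoke Theorem~\ref{main theorem}. The paper is simply more direct: it cites Theorem~6.1 of \cite{Kolok00b} for (A1) outright (rather than going through the class $\widetilde{\EuScript M}(H)$ and Proposition~\ref{Prop:C1}), and Theorem~3.1 of \cite{Kolok00b} to obtain (A2) with the exact exponents $\ep=\zeta=0$ rather than your approximate sequence $J=\{(\ep_n,0)\}$. Your alternative route to (A1) via the symbol and Proposition~\ref{Prop:C1-1} is precisely the content of the paper's remark following the proof.

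One point worth flagging: the paper's proof mentions only (A2) and does not explicitly separate out the critical case $\a=d$ (i.e.\ $Hd=1$), where Theorem~\ref{main theorem} formally requires (A3). Your caution here is well placed; the paper tacitly relies on Kolokoltsov's density bounds being global in $t$ so that (A3) is covered as well, but does not spell this out.
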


\begin{proof}\ It follows from Theorem 6.1 of Kolokoltsov \cite{Kolok00b}
that Condition (A1) holds for $X$ with $H = 1/\a$. On the other
hand, Theorem 3.1 of \cite{Kolok00b} implies that (A2) with $H =
1/\a$, $\ep = 0$ and $\zeta = 0$. Hence (\ref{Eq:Kolo}) follows immediately
from Theorem \ref{main theorem}.
\end{proof}

\begin{remark}\label{Remark section 4.2}
We remark that one can also apply the inequality (\ref{eq Maximal estimate}) to verify Condition (A1) for this case.
By (1.9) of \cite{Kolok00b}, the symbol of the stable jump diffusion defined by (\ref{Eq:Equ}) has the form:
$$
q(x,\xi)=i(A(x),\xi)-\int_{\mathbb{S}_d}|(\xi,s)|^\alpha M(x,ds).
$$
Moreover,  we assume that same  conditions on $A$ and $M$ as in Theorem 3.1 in \cite{Kolok00b}, which contain
\begin{itemize}
  \item there exist positive constants $C_1$ and $C_2$ such that
        $$
        C_1|\xi|^\alpha\leq \int_{\mathbb{S}_d}|(\xi,s)|^\alpha M(x,ds)\leq C_2|\xi|^\alpha,
        $$
  \item $A$ is uniformly bounded in $x$, i.e. $\sup_{x\in\mathbb{R}^d}|A(x)|<\infty$,

  \item $A(x)\equiv 0$ for $\alpha\leq 1$.
\end{itemize}
Then, by  (\ref{eq Maximal estimate}), for any $\gamma\in(0,1/\alpha)$, we have, for all $x \in \R^d$ and  $t\in[0,1]$,
\[
\begin{split}
\mathbb{P}^x\bigg\{\sup_{s\in[0,t]}|X_s-x|\geq t^\gamma\bigg\}
&\leq
Ct  \sup_{|y-x|\leq t^\gamma}\sup_{|\xi|\leq t^{-\gamma}}|q(y,\xi)|\\
&\leq
Ct \sup_{|\xi|\leq t^{-\gamma}}\Big(\sup_{x\in\mathbb{R}^d}|A(x)||\xi|+C_2|\xi|^\alpha\Big)\\
&\leq
Ct^{1-\gamma \alpha}\Big(\sup_{x\in\mathbb{R}^d}|A(x)|+C_2\Big).
\end{split}
\]
This verifies Condition (A1) with $\eta = 1 - \gamma \a$.
\end{remark}

\subsection{Non-symmetric stable-type processes}

Let $X=\{X(t), t \in \R_+\}$ be a pure jump process such that its infinitesimal
generator has the following form:
$$
\mathcal{L}^{\kappa}_{\alpha}f(x):=\lim_{\varepsilon\rightarrow 0}\int_{\{z\in \mathbb{R}^d:|z|
\geq \varepsilon\}}(f(x+z)-f(x))\frac{\kappa(x,z)}{|z|^{d+\alpha}}dz,
$$
%L\'evy measure $\nu(x,dy)$ has a density
%function $\rho$ as the following:
%\begin{equation}  \label{e:StaAss}
%\rho(x,z)=\frac{\kappa(x,z)}{|z|^{d+\alpha}}, \quad \forall x, z\in \mathbb{R}^d_0
%\end{equation}
where $d\geq 1, 0<\alpha<2$, and $\kappa(x, z)$ is a measurable function on
$\mathbb{R}^d\times\mathbb{R}^d$ satisfying
$$
0 <\kappa_0\leq \kappa(x, z)\leq \kappa_1,\quad \kappa(x, z)=\kappa(x,-z),
$$
and for some $\beta\in(0, 1)$
$$
|\kappa(x, z)-\kappa(y, z)| \leq \kappa_2|x-y|^{\beta}.
$$
This class of Markov processes has been studied by \cite{CK, CZ}, among others.

The following uniform dimension result  is a consequence of  Theorem \ref{main theorem}.

\begin{thm} \label{stable-process} Let $X=\{X(t), t \in \R_+\}$ be an non-symmetric $\alpha$-stable-type
Markov process defined above. If $1 < \a d$,  then,
for every $x \in \R^d$, $\PP^x$-almost surely
\begin{eqnarray}
\dim X(E) = \a \,\dim E \ \hbox{ and }\ \dimp X(E) = \a \,\dimp E
\end{eqnarray}
hold for all Borel sets $E \subseteq \R_+$.
\end{thm}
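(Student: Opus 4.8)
The plan is to derive the theorem from Theorem \ref{main theorem} in the case $1 < \alpha d$, which amounts to checking Conditions (A1) and (A2) with $H = 1/\alpha$. To verify (A1) I would compute the symbol of $X$ and apply Proposition \ref{Prop:C1-1}. Because $\kappa(x,z) = \kappa(x,-z)$, the imaginary part of the L\'evy--Khintchine integrand cancels and the compensator term vanishes, so the symbol reduces to the real, nonnegative function
\[
q(x,\xi) = \int_{\R^d \setminus \{0\}} \big(1 - \cos(z \cdot \xi)\big)\, \frac{\kappa(x,z)}{|z|^{d+\alpha}}\, dz .
\]
The two-sided bound $\kappa_0 \le \kappa(x,z) \le \kappa_1$, combined with the identity $\int_{\R^d}(1 - \cos(z\cdot\xi))|z|^{-d-\alpha}\,dz = c_{d,\alpha}|\xi|^{\alpha}$, gives $\kappa_0 c_{d,\alpha}|\xi|^{\alpha} \le q(x,\xi) \le \kappa_1 c_{d,\alpha}|\xi|^{\alpha}$ uniformly in $x$. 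Fixing $\gamma \in (0,1/\alpha)$ and applying the maximal inequality \eqref{eq Maximal estimate} with $r = t^{\gamma}$ then gives, for all $x \in \R^d$ and small $t$,
\[
\PP^{x}\Big\{\sup_{s\in[0,t]}|X_s - x| \ge t^{\gamma}\Big\}
\le C t \sup_{|y-x|\le t^{\gamma}}\sup_{|\xi| \le t^{-\gamma}}|q(y,\xi)|
\le C\, t^{1 - \gamma\alpha},
\]
so (A1) holds with $\eta = 1 - \gamma\alpha > 0$.

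Next I would verify (A2) with $H = 1/\alpha$ and $\ep = \zeta = 0$, the inputs being the sharp two-sided heat kernel bounds $p(t,x,y) \asymp t^{-d/\alpha} \wedge t\,|x-y|^{-(d+\alpha)}$ (for $0 < t \le T$) established for this class in \cite{CK, CZ}. The upper bound in (A2) is immediate: the on-diagonal estimate $p(t,x,y) \le C t^{-d/\alpha}$ integrated over $B(x,r)$ gives $P(t,x,B(x,r)) \le C t^{-d/\alpha} r^d = C (r/t^{1/\alpha})^d$, and together with $P \le 1$ this yields $P(t,x,B(x,r)) \le C \min\{1, (r/t^{1/\alpha})^d\}$. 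For the lower bound with $|y-x|\le r$ I would split into two regimes. When $r \le t^{1/\alpha}$, every $z \in B(x,r)$ satisfies $|y-z|\le 2r \le 2t^{1/\alpha}$, so $p(t,y,z) \ge c\,t^{-d/\alpha}$, and integration gives $P(t,y,B(x,r)) \ge c\,(r/t^{1/\alpha})^d$. When $r > t^{1/\alpha}$, I would use that the sphere $\partial B(x,r)$ is nearly flat at the scale $t^{1/\alpha}$, so the intersection $B(x,r)\cap B(y,t^{1/\alpha})$ contains at least a fixed fraction of a half-ball and hence has volume $\gtrsim t^{d/\alpha}$; applying the near-diagonal lower bound on this set yields a constant lower bound $P(t,y,B(x,r)) \ge C_1$, matching $\min\{1,(r/t^{1/\alpha})^d\}=1$. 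This is exactly the $\ep=\zeta=0$ verification used for the stable jump-diffusions in Theorem \ref{Th:Kolo}.

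Finally, since $1 < \alpha d$, I would invoke case (i) of Theorem \ref{main theorem} to obtain \eqref{Eq:HdimU} and \eqref{Eq:PdimU} with $1/H = \alpha$, which is the assertion. The dimension machinery is not the obstacle: once (A1) and (A2) are in place the conclusion is automatic. The genuine analytic content sits in the two-sided heat kernel estimates for the non-symmetric operator $\mathcal{L}^{\kappa}_{\alpha}$, which I take from \cite{CK, CZ}. The one place needing care in deducing (A2) from them is the constant lower bound for $P(t,y,B(x,r))$ when $y$ lies near the boundary of $B(x,r)$ and $r > t^{1/\alpha}$; this is precisely the regime feeding the denominator estimate $\int_0^{T/2}\min\{1,(r/s^{1/\alpha})^d\}\,ds \ge C r^{\alpha}$ inside the proof of Theorem \ref{main theorem}, and it must be handled by the flat-boundary half-ball observation above rather than by a naive containment $B(y,t^{1/\alpha}) \subseteq B(x,r)$, which fails in this regime.
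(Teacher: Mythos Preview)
Your proposal is correct and follows the same overall strategy as the paper: verify (A1) and (A2) with $H=1/\alpha$ and then invoke case (i) of Theorem~\ref{main theorem}. The execution differs in two minor ways. For (A1) you use the symbol bound $|q(x,\xi)|\le C|\xi|^\alpha$ together with Proposition~\ref{Prop:C1-1}; the paper mentions this route but instead gives a detailed alternative via It\^o's formula and a $p$th-moment estimate ($p<\alpha<2p$) for $\sup_{s\le t}|X_s-x|$. For (A2) you integrate the two-sided heat kernel bound directly, handling the boundary case $|y-x|=r$, $r>t^{1/\alpha}$ by a half-ball volume argument; the paper instead observes that the comparison kernels $P_i(t,x,A)=\int_A \min\{t^{-d/\alpha}, t|x-y|^{-(d+\alpha)}\}\,dy$ are themselves $1/\alpha$-self-similar, reduces to $t=1$, and checks $P_1(1,y,B(x,r))\ge C^{-1}\min\{1,r^d\}$ by essentially the same geometric observation you make. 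Either execution is fine; the paper's self-similarity repackaging is a little cleaner, while your symbol argument for (A1) is shorter than the paper's It\^o computation.
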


\begin{proof}
By \cite{CZ, Jin}, $X_t$ has a H\"{o}lder continuous transition density
 function $p(t,x,y)$. Furthermore, for any $T>0$, there exists a constant $C>1$ depending on $d, \alpha, \beta, \kappa_0, \kappa_1, \kappa_2, T$ such that for all $t\in (0,T], x, y\in \mathbb{R}^d$,
%\begin{eqnarray*}
%C_5 t(t^{1/\alpha}+|x-y|)^{-d-\alpha}\leq p(t,x,y)
%\leq C_{6}t(t^{1/\alpha}+|x-y|)^{-d-\alpha},
%\end{eqnarray*}
%which is equivalent to
\begin{eqnarray}\label{density for stable like}
C^{-1} \min\Big\{t^{-d/\alpha}, \frac{t}{|x-y|^{d+\alpha}}\Big\}\leq p(t,x,y)
\leq C\min\Big\{t^{-d/\alpha}, \frac{t}{|x-y|^{d+\alpha}}\Big\}.
\end{eqnarray}
%where $C_7$ and $C_{8}$ are two positive constants depending on $d, \alpha, \beta, \kappa_0, \kappa_1, \kappa_2$.

Define
$$
P_1(t,x, A):=C^{-1}\int_{A}\min\Big\{t^{-d/\alpha}, \frac{t}{|x-y|^{d+\alpha}}\Big\}dy
$$
and
$$
P_2(t,x, A):=C\int_{A}\min\Big\{t^{-d/\alpha}, \frac{t}{|x-y|^{d+\alpha}}\Big\}dy.
$$
Both $P_1$ and $P_2$ have $1/\alpha$-self-similar property, i.e., for any $r>0$
$$
P_i(t,x, A)=P_i(rt,r^{1/\alpha}x,r^{1/\alpha}A), i=1,2.
$$
Indeed, for all $A \in \mcl B(\R^d)$ and $r>0$, we have
\Bes
\begin{split}
& \ \ \ \int_{r^{1/\alpha} A} \min\Big\{(rt)^{-d/\alpha},
\frac{rt}{|r^{1/\alpha}x-y|^{d+\alpha}}\Big\} \dif y \\
&=\int_A  \min\Big\{(rt)^{-d/\alpha}, \frac{rt}
{r^{(d+\alpha)/\alpha}|x-z|^{d+\alpha}}\Big\} r^{d/\alpha}\dif z \\
&=\int_A  \min\Big\{t^{-d/\alpha}, \frac{t}{|x-z|^{d+\alpha}}\Big\} \dif z.
\end{split}
\Ees
By a straightforward computation, there exists constant $C>1$ such that
$$
P_1(1,y, B(x,r))\geq C^{-1} \min\{1, r^{d}\}, \quad \forall |y-x|\leq r,\quad  r>0.
$$
and
$$
P_2(1,x, B(x,r))\leq C \min\{1, r^{d}\},\quad \forall x\in\mathbb{R}^d,\quad r>0.
$$
Then for all $t\in (0, T]$ and $|y-x|\leq r$,
\begin{equation*}
\begin{split}
P(t, y, B(x,r)) &\geq P_1(t,y,B(x,r))\\
&= P_1(1, t^{-\frac{1}{\alpha}}y, B(t^{-\frac{1}{\alpha}}x, t^{-\frac{1}{\alpha}}r))\\
&\ge C^{-1}\min\bigg\{1,\, \Big(\frac{r} {t^{\frac{1}{\alpha}}}\Big)^{d}\bigg\}
\end{split}
\end{equation*}
and
\[
\begin{split}
P(t, x, B(x,r))&\leq P_2(t,x,B(x,r))\\
&= P_2(1, t^{-\frac{1}{\alpha}}x, B(t^{-\frac{1}{\alpha}}x, t^{-\frac{1}{\alpha}}r))\\
&\le C \min\bigg\{1,\, \Big(\frac{r}
{t^{\frac{1}{\alpha}}}\Big)^{d}\bigg\}.
\end{split}
\]
Thus Condition  (A2)  holds.
%Hence, Step 2 in Theorem \ref{main theorem} implies
%\begin{eqnarray}
%\mathbb{P}^x\{\dim X(E)\geq \alpha\dim E \quad \text{for all}
%\quad E\subseteq [0, \infty)\}=1. \label{lower for stable-like}
%\end{eqnarray}

\vskip 0.2cm
Next, we verify the condition (A1). As in Remark \ref{Remark section 4.2} , this can be done by applying
Proposition \ref{Prop:C1-1}  and the fact that the symbol $q$ of $X$  has the form
$$
q(x,\xi)=\int_{\mathbb{R}^d\backslash \{0\}} \Big(1-e^{iz\cdot\xi}+iz\cdot\xi 1_{(0,1]}(|z|)\Big)\frac{\kappa(x,z)}{|z|^{d+\alpha}}dz,
$$
which can be bounded from above by $C |\xi|^\alpha$. Here we provide a different proof.
%{\bf First method:} We will use (\ref{eq Maximal estimate}) to verify Conditions (A1).  The symbol $q$ here obviously has the form
%$$
%q(x,\xi)=\int_{\mathbb{R}^d/ \{0\}}\Big(1-e^{iz\cdot\xi}+iz\cdot\xi 1_{(0,1]}(|z|)\Big)\frac{\kappa(x,z)}{|z|^{d+\alpha}}dz.
%$$
%By the properties of $\kappa$, we have
%\begin{eqnarray*}
%|q(x,\xi)|&=&\Big|\int_{0<|z|\leq |\xi|^{-1}} \Big(1-e^{iz\cdot\xi}+iz\cdot\xi 1_{(0,1]}(|z|)\Big)\frac{\kappa(x,z)}{|z|^{d+\alpha}}dz
 %          +\int_{|z|> |\xi|^{-1}} \Big(1-e^{iz\cdot\xi}\Big)\frac{\kappa(x,z)}{|z|^{d+\alpha}}dz\Big|\\
 %        &\leq&
%       C_{\kappa_1}\Big(|\xi|^2\int_{0<|z|\leq |\xi|^{-1}} \frac{|z|^2}{|z|^{d+\alpha}}dz
%           +
%           \int_{|z|> |\xi|^{-1}} \frac{1}{|z|^{d+\alpha}}dz\Big)\\
%         &\leq&
%         C_{\kappa_1,\alpha}|\xi|^\alpha,
%\end{eqnarray*}
%here $ C_{\kappa_1}$ and $C_{\kappa_1,\alpha}$ are  finite positive constants independent on $x$.
%By (\ref{eq Maximal estimate}), for any $\gamma\in(0,1/\alpha)$,
%$$
%    \sup_{x\in\mathbb{R}^d}\mathbb{P}^x\{\sup_{s\in[0,t]}|X_s-x|\geq t^\gamma\}
%\leq    C_{\kappa_1,\alpha}t\sup_{|\xi|\leq t^{-\gamma}}|\xi|^\alpha
%=  C_{\kappa_1,\alpha}t^{1-\gamma\alpha}.
%$$
%Set $\eta=1-\gamma\alpha>0$, we have Condition (A1).

%\vskip 0.2cm{\bf Second method:}
By the L\'evy-It\^o decomposition, one has
$$
X_t=X_0+\int^t_0 \int_{|y|\leq l}y \tilde N(X_{s-}, dy, ds)+\int^t_0 \int_{|y|>l}y N(X_{s-}, dy, ds),
$$
where $l$ can be any positive constant, $N(x, dy, ds)$ is the
Poisson random measure with the intensity measure $\nu(x, dy) ds:=\frac{\kappa(x,y)}{|y|^{d+\alpha}}dyds$ and
$\tilde N(X_{s-}, dy, ds)=N(X_{s-}, dy, ds)-\nu(X_{s-}, dy) ds$ is
the compensated Poisson random measure.

Since $\alpha\in(0,2)$,
we can find some $p$ satisfying $0<p\leq1$ and $p<\alpha<2p$.
For some $l>0$ and $x \in \R^d$, we define a smooth function
$f$ on $\mathbb{R}^d$ by
$$f(y)=(|y-x|^{2}+l^2)^{p/2}, \quad  y \in\mathbb{R}^d,$$
it is easy to check that for all $x,y\in\mathbb{R}^d$,
\begin{eqnarray}
|f(y_1)-f(y_2)| \leq|y_1-y_2|^{p}.     \label{f}
\end{eqnarray}
Now let $X_0=x$, by It\^o's formula \cite[Sect. 4.4.2]{Ap04},
we get, for any $l>0$,
\begin{eqnarray}
f(X_t)=\!\!\!\!\!\!\!\!&&f(x)+\int^{t}_{0}\int_{|y|\leq l}
[f(X_{s-}+y)-f(X_{s-})]\widetilde{N}(X_{s-}, dy,ds)\nonumber\\
&&+\int^{t}_{0}\int_{|y|\leq l} \left[f(X_{s-}+y)-f(X_{s-})
-\langle \nabla f(X_{s-}), y\rangle\right]\nu(X_{s-}, dy)ds\nonumber\\
&&+\int^{t}_{0}\int_{|y|>l}[f(X_{s-}+y)-f(X_{s-})]N(X_{s-},dy,ds)\nonumber\\
=:\!\!\!\!\!\!\!\!&&l^p+I_1(t)+I_2(t)+I_3(t) \nonumber.
\end{eqnarray}

Take $l=T^{1/\alpha}$, for any $T>0$, by Burkholder's inequality and (\ref{f}),
we have
\begin{eqnarray}
\mathbb{E}\bigg(\sup_{0\leq t\leq T}|I_1(t)|\bigg)
&\leq& \mathbb{E}\left[\int^{T}_{0}\int_{|y|\leq T^{1/\alpha}}|y|^{2p}N(X_{s-},dy, ds)\right]^{1/2}\nonumber\\
&\leq& \left[\int_0^T \int_{|y|\leq T^{1/\alpha}}|y|^{2p}
\frac{\kappa_1}{|y|^{d+\alpha}}dy ds \right]^{1/2} \nonumber\\
&\leq& CT^{p/\alpha}.\label{I_1}
\end{eqnarray}
For $I_2(t)$, by Taylor's expansion, we have
\
\Be
\begin{split}
f(X_{s-}+y)-f(X_{s-})
-\langle \nabla f(X_{s-}), y\rangle=\langle y, \nabla^2 f(X_{s-}+\theta y)y \rangle,
\end{split}
\Ee
where $\theta \in [0,1]$ depending on $X_{s-}$ and $y$, one can verify that
\Bes
\begin{split}
&\left|\langle y, \nabla^2 f(X_{s-}+\theta y)y \rangle\right|\\
&=\bigg|\frac{p|y|^2} {\left(T^{2/\alpha}+|X_{s-}+\theta y-x|^2\right)^{1-\frac p2}}
+\frac{p(p-2)|\langle y, X_{s-}+\theta y-x\rangle|^2}{\left(T^{2/\alpha}+|X_{s-}
+\theta y-x|^2\right)^{2-\frac p2}}\bigg| \\
& \le \frac{p|y|^2}{T^{\frac 2\alpha (1-\frac p2)}}+\frac{p(2-p)|y|^2 |X_{s-}
+\theta y-x|^2}{\left(T^{2/\alpha}+|X_{s-}+\theta y-x|^2\right)^{2-\frac p2}} \\
& \le \frac{(3p-p^2)|y|^2}{T^{\frac 2\alpha (1-\frac p2)}}.
\end{split}
\Ees
Hence,
\begin{equation}\label{I_2}
\begin{split}
\mathbb{E}\bigg(\sup_{0\leq t\leq T}|I_2(t)|\bigg)
&\leq C T^{\frac{2}{\alpha}(\frac{p}{2}-1)}\int_0^T
\E \int_{|y|\leq T^{1/\alpha}}|y|^{2}\nu(X_{s-},dy) ds \nonumber\\
& \le  C T^{\frac{2}{\alpha}(\frac{p}{2}-1)}\int_0^T
\int_{|y|\leq T^{1/\alpha}}\frac{\kappa_1 |y|^{2}}{|y|^{d+\alpha}} dy ds  \\
&\leq  CT^{p/\alpha}.
\end{split}
\end{equation}
For $I_3(t)$, by (\ref{f}) again, we get
\begin{equation}\label{I_3}
\begin{split}
\mathbb{E}\bigg(\sup_{0\leq t\leq T}|I_3(t)|\bigg) &\leq \mathbb{E} \int^{T}_{0}
\int_{|y|>T^{1/\alpha}}|y|^{p}\nu(X_{s-},dy)ds  \\
&\leq \int_0^T \int_{|y|>T^{1/\alpha}}\frac{\kappa_1 |y|^{p}}{|y|^{d+\alpha}} dy ds\\
&\leq CT^{p/\alpha}.
\end{split}
\end{equation}
Combining \eqref{I_1}-\eqref{I_3} yields
\begin{eqnarray*}
\mathbb{E}^x\bigg(\sup_{t\in[0,T]}\big|(|X_t-x|^{2}+
T^{\frac{2}{\alpha}})^{p/2}-T^{\frac{p}{\alpha}}\big| \bigg) \leq CT^{p/\alpha}.
\end{eqnarray*}
Hence, for any $r>\alpha$,
\begin{eqnarray*}
\mathbb{P}^{x} \bigg\{\sup_{0\leq s\leq t}|X_s-x|\geq t^{1/r} \bigg\}
&\leq& \frac{\mathbb{E}^x\left[\sup_{s\in[0,t]}|X_s-x|^{p}\right]}{ t^{p/r}}\\
&\leq&\frac{\mathbb{E}^x\left[\sup_{s\in[0,t]}|(|X_s-x|^{2}+t^{\frac{2}{\alpha}})^{p/2}
-t^{\frac{p}{\alpha}}\right]+t^{\frac{p}{\alpha}}}{ t^{p/r}}\\
&\leq& C t^{p/\alpha-p/r}.
\end{eqnarray*}
Thus Condition (A1) also holds. Hence, the conclusion of Theorem \ref{stable-process}
follows from Theorem \ref{main theorem}.
\end{proof}

\begin{remark}
Notice that the estimate of transition density $p(t,x,y)$ in (\ref{density for stable like}) holds for any $t\in (0, T]$, which is sufficient to prove condition (A2). But for condition (A3), we need the estimate (\ref{density for stable like}) holds for any $t>0$. Hence the above theorem holds only for $1<\alpha d$.

\end{remark}

%\textcolor{blue}{XXXX Can we mention Chen and Zhang (2016, and latest papers), and see if our method
%can be applicable to their more general processes?}

\medskip

\textbf{Acknowledgment}. We thank the referee for his or her valuable comments and,  in particular,
for suggesting the use of Proposition 3.2, which has helped to simplify the proof of Theorem
\ref{Th:Levy} and improve the quality of this paper.

This work was conducted during the first author visited
the Department of Mathematics, Faculty of Science and Technology, University
of Macau, he thanks for the finance support and hospitality. Xiaobin Sun is
supported by the National Natural Science Foundation of China (11601196, 11771187),
Natural Science Foundation of the Higher Education Institutions of Jiangsu Province (16KJB110006), Scientific Research Staring Foundation of Jiangsu Normal University (15XLR010)
and the Project Funded by the Priority Academic Program Development of Jiangsu
Higher Education Institutions. Y. Xiao is partially supported by NSF
grants DMS-1612885 and DMS-1607089. Lihu Xu is supported by the following grants:
NNSFC 11571390, Macau S.A.R. FDCT 030/2016/A1 and FDCT 038/2017/A1,
University of Macau MYRG (2015-00021-FST, 2016-00025-FST). Jianliang Zhai is
supported by the National Natural Science
Foundation of China (11431014, 11401557).

\bibliographystyle{amsplain}

\end{document}